\documentclass{amsart}
\usepackage{graphicx}
\vfuzz2pt 
\hfuzz2pt 
\newtheorem{thm}{Theorem}[section]

\newtheorem{lem}[thm]{Lemma}
\newtheorem{prop}[thm]{Proposition}
\theoremstyle{definition}
\newtheorem{defn}[thm]{Definition}
\theoremstyle{remark} \theoremstyle{Proof}
\newtheorem{rem}[thm]{Remark}
\newtheorem{exple}[thm]{Example}
\numberwithin{equation}{section}

\author[O. Ajebbar]{Omar Ajebbar}
\address{Omar Ajebbar\\Department of Mathematics\\
Ibn Zohr University, Faculty of Sciences, Agadir\\
Morocco} \email{omar-ajb@hotmail.com}
\author[E. Elqorachi]{ Elhoucien Elqorachi}
\address{Elhoucien Elqorachi\\Department of Mathematics\\
Ibn Zohr University, Faculty of Sciences, Agadir\\
Morocco} \email{elqorachi@hotmail.com}
\begin{document}

\title[Extensions of the Cosine-Sine functional equation]{Extensions of the Cosine-Sine functional equation}
\keywords{Functional equation; Cosine-Sine functional equation;
Levi-Civita; Monoid; Group; Additive function; Multiplicative
function; Character.}

\thanks{2010 Mathematics Subject
Classification. Primary 39B52; Secondary 39B32}

\begin{abstract} The aim of the present paper is to give
extensions of the cosine-sine functional equation.
\end{abstract}
\maketitle
\section{Introduction}
\subsection{Connection to the literature}\quad\quad\quad\\
In the present paper we solve the following functional equations:
\begin{equation}\label{eq51}f(xy)=g_{1}(x)h_{1}(y)+g(x)h_{2}(y)+\chi(x)h(y),\,x,y\in
G,\end{equation} on a group $G$, where $g$ is the average of two
different characters $\mu_{1}$ and $\mu_{2}$ of $G$ and $\chi$ is a
character of $G$ such that $\mu_{1}\neq\chi$ and $\mu_{2}\neq\chi$;
\begin{equation}\label{Eq5-1}f(xy)=g(x)h(y)+\sum_{j=1}^{N}\chi_j(x)h_j(y),\,x,y\in
G,\end{equation} where $\chi_{1},\cdot\cdot\cdot,\chi_{N}$ are $N$
distinct characters of a group $G$;
\begin{equation}\label{equ51}f(xy)=g_{1}(x)h_{1}(y)+g(x)h_{2}(y)+\chi(x)A(x)h(y),\quad
x,y\in G,\end{equation} where $g$ is the average of two given
nonzero multiplicative functions $\mu$ and $\chi$ on a monoid $G$
and $A$ is an additive function on $G$ such that $\chi A\neq0$;
\begin{equation}\label{Eq.5-1}f(xy)=g_{1}(x)h_{1}(y)+\mu(x)h_{2}(y)+\chi(x)A(x)h(y),\,x,y\in
G,\end{equation} where $\mu$ and $\chi$ are two different characters
of a group $G$ and $A:G\rightarrow\mathbb{C}$ is a nonzero additive
function on $G$. To solve the functional equations (\ref{eq51}),
(\ref{Eq5-1}), (\ref{equ51}) and (\ref{Eq.5-1}) we will need to
solve respectively the corresponding functional equations
\begin{equation}\label{eq52}f(xy)=f(x)h_{1}(y)+g(x)h_{2}(y)+\chi(x)h(y),\,x,y\in G,\end{equation}
\begin{equation}\label{Eq5-2}f(xy)=f(x)h(y)+\sum_{j=1}^{N}\chi_j(x)h_j(y),\,x,y\in G,\end{equation}
\begin{equation}\label{equ52}f(xy)=f(x)h_{1}(y)+g(x)h_{2}(y)+\chi(x)A(x)h(y),\,x,y\in G,\end{equation}
and
\begin{equation}\label{Eq.5-2}f(xy)=f(x)h_{1}(y)+\mu(x)h_{2}(y)+\chi(x)A(x)h(y),\,x,y\in G.\end{equation}
We notice that (\ref{eq51}), (\ref{Eq5-1}), (\ref{equ51}),
(\ref{Eq.5-1}), (\ref{eq52}), (\ref{Eq5-2}), (\ref{equ52}) and
(\ref{Eq.5-2}) are examples of Levi-Civita's functional equation
$$f(xy)=\sum_{j=1}^{N}g_j(x)h_j(y),\; x,y \in G,$$ on which lots of work
has been done. Sz\'ekelyhidi \cite[Section 10]{Sz�kelyhidi}
computes for any abelian group $(G,+)$ the solution of
$$f(x+y)=\sum_{j=1}^{N}g_j(x)h_j(y),\; x,y \in G,$$ under the
assumption that each of the sets
$\{g_{1},g_{2},\cdot\cdot\cdot,g_{N}\}$ and
$\{h_{1},h_{2},\cdot\cdot\cdot,h_{N}\}$ is linearly independent. The
structure of any solution of Levi-Civita's functional equation on
monoids was described by using matrix-coefficients of the right
regular representations \cite[Theorem 5.2]{Stetkaer1}. In \cite[pp .
25-27]{Andreescu et al.} Vincze's equation, which is a particular
case of our functional equations, was solved. We refer also to
\cite{Acz�l, LC, SH, Sz�kelyhidi1}.
\par If $h=0$ then the functional equations
(\ref{eq51}) and (\ref{equ51}) become
$$f(xy)=g_{1}(x)h_{1}(y)+g(x)h_{2}(y),$$ which was recently solved by
Ebanks \cite{EB}, Stetk\ae r \cite{Stetkaer2}, and Ebanks and
Stetk{\ae}r \cite{Ebanks and Stetkaer}.
\par If $h=0$, $f=g_{1}=h_{2}$ and $h_{1}=g$ then Eq. (\ref{eq51}) and (\ref{equ51}) reduce to the sine addition law
$$f(xy)=f(x)g(y)+g(x)f(y)$$ for which solutions are known on semigroups, see for example \cite[Theorem
4.1]{Stetkaer1}.\\
Chung, Kannappan and Ng \cite{45} solved the  functional equation
$$f(xy)=f(x)g(y)+f(y)g(x)+h(x)h(y),\; x,y \in G,$$ where $G$ is a
group. Recently, the results of \cite{45} were extended by the
authors \cite{Ajebbar and Elqorachi} to semigroups generated by
their squares.
\par We relate the solutions of the functional equations
(\ref{eq51}), (\ref{eq52}), (\ref{Eq5-1}), (\ref{Eq5-2}),
(\ref{Eq.5-1}) and (\ref{Eq.5-2})  to those of a functional equation
of type
$$f(xy)=f(x)\chi_{1}(y)+\chi_{2}(x)f(y),$$ where $\chi_{1}$ and
$\chi_{2}$ are two characters of $G$, which was solved by
Stetk{\ae}r \cite{Stetkaer2}.
\subsection{Motivation}\quad\quad\quad\\
\par In \cite{EB} Ebanks solved the functional equation
\begin{equation}\label{Eq5-2-1}f(xy)=k(x)M(y)+g(x)h(y),\,x,y\in S,\end{equation}
for four unknown central functions $f,g,h,k$ on certain semigroups
$S$, where $M$ is a fixed multiplicative function on $S$.
\par In \cite{Stetkaer2} Stetk{\ae}r solved, by removing the restriction of the solutions, the functional equation
\begin{equation}\label{Eq5-2-2}f(xy)=f(x)\chi_{1}(y)+\chi_{2}(x)f(y),\,x,y\in G\end{equation}
and its Pexiderized version
\begin{equation}\label{Eq5-2-3}f(xy)=g_{1}(x)h_{1}(y)+\chi_{2}(x)h_{2}(y),\,x,y\in G\end{equation}
on a group $G$, where $\chi_{1}$ and $\chi_{2}$ are fixed characters
of $G$.
\par In \cite{Ebanks and Stetkaer} Ebanks and Stetk{\ae}r studied the more general variant of (\ref{Eq5-2-2})
\begin{equation}\label{Eq5-2-4}f(xy)=g_{1}(x)h_{1}(y)+g(x)h_{2}(y),\end{equation}
for four unknown $f,g_{1},h_{1},h_{2}$ on a monoid, where $g$ is a
linear combination of $n\geq2$ distinct multiplicative functions.
\par Recently Belfakih and Elqorachi \cite[Theorem 2.7]{BEL and ELQ } solved the functional equation $$f(xy)=g(x)h(y)+\sum_{j=1}^{n}g_{j}(x)h_{j}(y),\,x,y\in M$$
on a monoid $M$, where $g_{j}$ are linear combinations of at least
$2$  distinct nonzero multiplicative functions on $M$, and
 $f,g,h,h_{j}$ are the unknown functions.
\par Let $G$ be a group. Let $\mu$ and $\chi$ be two
different characters of $G$ and let $A:G\rightarrow\mathbb{C}$ be a
non zero additive function. Let $c_{1},c_{2}\in\mathbb{C}$ be two
constants satisfying $1+c_{1}c_{2}^{2}=0$. We check by elementary
computations that the functions $f,g,k:G\rightarrow\mathbb{C}$
defined by $f:=-c_{1}\mu+c_{1}\chi-c_{1}c_{2}\chi\,A$,
$g:=\dfrac{\mu+\chi}{2}-\dfrac{c_{2}}{2}\chi\,A$ and $k:=\chi\,A$
satisfy the functional equation $$f(xy)=f(x)g(y)+g(x)f(y)+k(x)k(y)$$
on the group $G$, which implies, by simple computations, that the
triple $(f,g,h)$ is a solution of the functional equation
$$f(xy)=f(x)g(y)+\dfrac{\mu(x)+\chi(x)}{2}f(y)+\chi(x)A(x)h(y)$$ for
all $x,y\in G$, where $h:=-\dfrac{1}{2}c_{2}f+\chi\,A$ and
$h\neq\chi\,A$ because $c_{1},c_{2}\in\mathbb{C}\setminus\{0\}$.
Moreover, for a given character $m$ of $G$, we check easily that the
quadruple $(m,\dfrac{m+\mu+\chi}{2}+\chi\,A,2m,-2m,-2m)$ satisfies
the functional equation
$$f(xy)=g_{1}(x)h_{1}(y)+\dfrac{\mu(x)+\chi(x)}{2}h_{2}(y)+\chi(x)A(x)h(y)$$
while the functions $f:=\mu$, $g_{1}:=-\mu+\chi\,A$, $h_{1}:=\chi$,
$h_{2}:=\mu+\chi$ and $h:=-\chi$ satisfy the functional equation
$$f(xy)=g_{1}(x)h_{1}(y)+\mu(x)h_{2}(y)+\chi(x)A(x)h(y)$$ with $g_{1}\neq f$, $h_{2}\neq f$, $h\neq0$ and $h\neq\chi\,A$.
\par The papers cited above motivate us to deal with the functional
equations (\ref{eq51}), (\ref{Eq5-1}), (\ref{equ51}),
(\ref{Eq.5-1}), (\ref{eq52}), (\ref{Eq5-2}), (\ref{equ52}) and
(\ref{Eq.5-2}), which are not covered by the cited works.
\subsection{About our results}\quad\quad\quad\\
\par In the present paper we solve the functional equations (\ref{eq51}), (\ref{Eq5-1}),
(\ref{equ51}), (\ref{Eq.5-1}), (\ref{eq52}), (\ref{Eq5-2}),
(\ref{equ52}) and (\ref{Eq.5-2}), which are not in the literature.
We express the solutions of the functional equations (\ref{eq51}),
(\ref{Eq5-1}), (\ref{equ51}) and (\ref{Eq.5-1}) respectively in term
of the solutions of (\ref{eq52}), (\ref{Eq5-2}), (\ref{equ52}) and
(\ref{Eq.5-2}). We relate some categories of solutions to the
solutions of the functional equation
$f(xy)=f(x)\chi_{1}(y)+\chi_{2}(x)f(y)$ where $\chi_{1}$ and
$\chi_{2}$ are two characters of the group $G$, which was recently
solved by Stetk{\ae}r \cite{Stetkaer2}. Our results are extensions
of those of \cite{Stetkaer2} and \cite{Ebanks and Stetkaer}.\\
There is no connection between the functional equations
(\ref{eq51}), (\ref{Eq5-1}), (\ref{equ51}) and (\ref{Eq.5-1}) nor
between (\ref{eq52}), (\ref{Eq5-2}), (\ref{equ52}) and
(\ref{Eq.5-2}), but the reasonings for the functional equations with
additive function , i.e. (\ref{equ51}), (\ref{Eq.5-1}),
(\ref{equ52}) and (\ref{Eq.5-2}) are similar to those for the
functional equations that not contain the additive function, i.e.
(\ref{eq51}), (\ref{Eq5-1}), (\ref{eq52}), (\ref{Eq5-2}). This is
due to fact that the resolution of this functional equations\\
(1) make use the linear independence between some unknown functions
and the given arguments in equations, namely multiplicative
functions, additive function or characters.\\(2) relate some of them
to the functional equation $f(xy)=f(x)\chi_{1}(y)+\chi_{2}(x)f(y)$,
where $\chi_{1}$ and $\chi_{2}$ are two characters of the group $G$.
\par The organization of the paper is as follows. In the next
section we give some definitions and notations. In the third section
we give preliminary results that we need in the paper. In sections
4, 5, 6 and 7 we prove our main results and give
some examples.\\
Our methods are elementary algebraic manipulations (no analysis or
geometry come into play).
\section{Set Up and Terminology}
\begin{defn} Let $f:S\to\mathbb{C}$ be a function on a
semigroup $S$. We say that\\
$f$ is central if $f(xy)=f(yx)$ for all $x,y\in G$.\\
$f$ is additive if $f(xy)=f(x)+f(y)$ for all $x,y\in S$.\\
$f$ is multiplicative if $f(xy)=f(x)f(y)$ for all $x,y\in S$.\\
If $S=G$ is a group, then we say that $f$ is a character of $G$ if
$f$ is a homomorphism of $G$ into $\mathbb{C}^{*}$, where
$\mathbb{C}^{*}$ denotes the multiplicative group of nonzero complex
numbers.
\end{defn}
If $G$ is a group, then $[G,G]$ denotes the commutator of $G$, i.e.,
the subgroup of $G$ generated by the set of commutators
$[x,y]:=xyx^{-1}y^{-1},\, x,y\in G$.\\
If $\chi_{1}$ and $\chi_{2}$ are two characters of a group $G$, we
denote by $\varphi_{\chi_{1},\chi_{2}}$ a solution of the functional
equation
$$f(xy)=f(x)\chi_{1}(y)+\chi_{2}(x)f(y),\,x,y\in G.$$
\textbf{Blanket assumption:} Throughout this paper $G$ denotes a
group or a monoid with identity element $e$.
\section{Preliminaries}
In this section we give the essential tools that we will need in the
paper.
\begin{prop}\label{prop51} \cite[Theorem 3.18(a)]{Stetkaer1} Let $S$ be a semigroup and
$n\in\mathbb{N}$. Let
$\chi,\chi_{2},\cdot\cdot\cdot,\chi_{n}:S\to\mathbb{C}$ be $n$
different multiplicative functions and let
$a_{1},a_{2},\cdot\cdot\cdot,a_{n}\in\mathbb{C}$.\\
If $a_{1}\chi+a_{2}\chi_{2}+\cdot\cdot\cdot+a_{n}\chi_{n}=0$, then
$a_{1}\chi=a_{2}\chi_{2}=\cdot\cdot\cdot=a_{n}\chi_{n}=0$, in
particular, any set of non zero multiplicative functions is linearly
independent.
\end{prop}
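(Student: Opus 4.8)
The plan is to argue by induction on $n$, using the multiplicative property to manufacture a second linear relation from the given one and then to cancel one of the functions. The whole point is that nothing analytic is needed: everything is a pointwise identity in $\mathbb{C}$.

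For $n=1$ the hypothesis $a_1\chi=0$ is exactly the conclusion, so the base case is immediate. For the inductive step, suppose the statement holds for any collection of $n-1$ distinct multiplicative functions, and assume $\sum_{j=1}^{n}a_j\chi_j=0$ with $\chi_1:=\chi,\chi_2,\dots,\chi_n$ pairwise distinct. Since $\chi_1\neq\chi_n$, there is an element $z\in S$ with $\chi_1(z)\neq\chi_n(z)$. The idea is to evaluate the relation at $zx$ and exploit $\chi_j(zx)=\chi_j(z)\chi_j(x)$, which gives $\sum_{j=1}^{n}a_j\chi_j(z)\chi_j(x)=0$ for all $x\in S$. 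Subtracting $\chi_n(z)$ times the original relation annihilates the $n$-th term and leaves $\sum_{j=1}^{n-1}a_j\bigl(\chi_j(z)-\chi_n(z)\bigr)\chi_j(x)=0$ for all $x\in S$.

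This last identity is a vanishing linear combination of the $n-1$ distinct multiplicative functions $\chi_1,\dots,\chi_{n-1}$, so the induction hypothesis forces every summand to vanish; in particular $a_1\bigl(\chi_1(z)-\chi_n(z)\bigr)\chi_1=0$. As the scalar $\chi_1(z)-\chi_n(z)$ is nonzero by the choice of $z$, this yields $a_1\chi_1=0$. Feeding this back into the original relation leaves $\sum_{j=2}^{n}a_j\chi_j=0$, a relation among the $n-1$ distinct functions $\chi_2,\dots,\chi_n$, whence the induction hypothesis again gives $a_j\chi_j=0$ for $j=2,\dots,n$. The ``in particular'' clause is then immediate: if every $\chi_j$ is nonzero, then $a_j\chi_j=0$ forces $a_j=0$, so any family of nonzero multiplicative functions is linearly independent.

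I expect the only delicate point to be resisting the temptation to divide by values of the $\chi_j$: on a general semigroup a nonzero multiplicative function may vanish at some points, so it need not be a character and need not be invertible. The argument sketched above is purely pointwise and never divides by a function value---it divides only by the fixed nonzero scalar $\chi_1(z)-\chi_n(z)$---so it remains valid at this level of generality. A minor bookkeeping remark is that at most one of the $\chi_j$ can be the identically zero function, and that case is harmless, since the corresponding summand $a_j\chi_j$ is already zero and can be discarded before running the induction.
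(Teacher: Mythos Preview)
Your argument is correct; it is the classical Artin--Dedekind induction for linear independence of characters, adapted to multiplicative functions on a semigroup. The paper itself does not supply a proof of this proposition: it simply quotes the result from \cite[Theorem~3.18(a)]{Stetkaer1}, so there is no in-paper proof to compare with. Your write-up is self-contained and careful about the one genuine pitfall (never dividing by a value of some $\chi_j$), and your bookkeeping remark about the at-most-one identically zero multiplicative function is exactly the right way to dispose of that edge case.
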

\begin{lem}\label{lem52} \cite[pp. 1122-1123]{Ajebbar and Elqorachi} Let $a:S\rightarrow\mathbb{C}$ be an additive
function and $m:S\rightarrow\mathbb{C}$ be a multiplicative function
on a semigroup $S$.
\par If $ma=\Sigma_{j=1}^{N}c_{j}\mu_{j}$, where
$c_{j}\in\mathbb{C}$ and $\mu_{j}:S\rightarrow\mathbb{C}$ is
multiplicative for each $j=1,2,\cdot\cdot\cdot{},N$, then $ma=0$.
\end{lem}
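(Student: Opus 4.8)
The plan is to exploit the algebraic relation that $ma$ satisfies on its own, and then to read off the conclusion from the linear independence of multiplicative functions (Proposition \ref{prop51}). First I would dispose of the trivial case and normalise: if $m=0$ the claim is immediate, so assume $m\neq0$; after combining like terms and discarding vanishing summands we may suppose the $\mu_j$ are pairwise distinct nonzero multiplicative functions and that every $c_j\neq0$. The key observation is that $ma$ obeys a sine-type addition formula: since $m$ is multiplicative and $a$ is additive,
\[
(ma)(xy)=m(x)m(y)\bigl(a(x)+a(y)\bigr)=m(y)(ma)(x)+m(x)(ma)(y),\qquad x,y\in S.
\]

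Next I would substitute the hypothesis $ma=\sum_{j=1}^{N}c_j\mu_j$ into this identity, obtaining
\[
\sum_{j=1}^{N}c_j\mu_j(x)\mu_j(y)=m(y)\sum_{j=1}^{N}c_j\mu_j(x)+m(x)\sum_{j=1}^{N}c_j\mu_j(y)
\]
for all $x,y\in S$. Fixing $x$ and viewing each side as a function of the variable $y$, this is a linear relation among the multiplicative functions $\mu_1,\dots,\mu_N,m$, with coefficients depending on the parameter $x$. Rearranged, it reads
\[
\sum_{j=1}^{N}c_j\bigl(\mu_j(x)-m(x)\bigr)\mu_j(y)-\Bigl(\sum_{j=1}^{N}c_j\mu_j(x)\Bigr)m(y)=0 .
\]
Since the functions of $y$ appearing here form a set of distinct (hence linearly independent) multiplicative functions, Proposition \ref{prop51} forces every coefficient to vanish for each fixed $x$, and therefore identically. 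From the vanishing of the coefficient of $m(y)$ one gets $\sum_{j}c_j\mu_j=0$, and a final application of Proposition \ref{prop51} to the distinct $\mu_j$ yields $c_j=0$ for all $j$, whence $ma=\sum_{j}c_j\mu_j=0$.

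The only real subtlety — and the step deserving the most care — is the bookkeeping when $m$ coincides with one of the $\mu_j$, say $\mu_{j_0}=m$. Then the summand $c_{j_0}(\mu_{j_0}(x)-m(x))\mu_{j_0}(y)$ vanishes identically while the function $m(y)$ coincides with $\mu_{j_0}(y)$, so one must first recombine the coefficient of this repeated function before invoking Proposition \ref{prop51}. Once the two cases ($m\notin\{\mu_1,\dots,\mu_N\}$ and $m=\mu_{j_0}$) are organised correctly, the distinctness required by Proposition \ref{prop51} is in place in each case, and the conclusion $ma=0$ drops out uniformly.
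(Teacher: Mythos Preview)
Your argument is correct. The paper itself does not supply a proof of this lemma; it simply cites the authors' earlier work \cite[pp.~1122--1123]{Ajebbar and Elqorachi}, so there is no in-paper argument to compare against. Your route---deriving the sine-type identity $(ma)(xy)=m(y)(ma)(x)+m(x)(ma)(y)$, substituting the hypothesis, and then reading off the vanishing of the coefficient of $m(y)$ via Proposition~\ref{prop51}---is the natural one and is essentially how such statements are proved. The bookkeeping you flag (recombining coefficients when $m=\mu_{j_0}$ before invoking linear independence) is the only place one can slip, and you have handled it correctly: in both cases the coefficient of the function $m(\cdot)$ in the $y$-variable is exactly $-\sum_j c_j\mu_j(x)=-(ma)(x)$, and its vanishing gives $ma=0$ directly.
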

\par  Let $\chi_{1}$ and $\chi_{2}$ be characters
of $G$. The following result, which is taken from \cite[Proposition
4, Theorem 11]{Stetkaer2}, solves the functional equation
\begin{equation}\label{Eq}f(xy)=f(x)\chi_{1}(y)+\chi_{2}(x)f(y),\,x,y\in G.\end{equation}
\begin{thm}\label{Thm53} \cite[p.5, pp .8-9]{Stetkaer2} Let $G$ be a group.\\
(1) If $\chi_{1}=\chi_{2}$ then the solutions of (\ref{Eq}) are the
functions $f=\chi_{1}\,A$ where $A:G\to\mathbb{C}$ is
an additive function on $G$.\\
(2) If $\chi_{1}\neq\chi_{2}$ then the solutions of (\ref{Eq}) are
the functions
$$f(x)=\alpha(\chi_{1}(x)-\chi_{2}(x))+A([y_{0},x])\chi_{1}(x),\,x\in
G,$$ where $y_{0}\in G$ such that
$\chi_{1}(y_{0})\neq\chi_{2}(y_{0})$, $\alpha$ ranges over
$\mathbb{C}$ and $A:[G,G]\to\mathbb{C}$ ranges over the additive
functions on $[G,G]$ with the transformation property
$$A(xcx^{-1})=\dfrac{\chi_{2}(x)}{\chi_{1}(x)}A(c)\quad\text{for all}\quad x\in G\quad\text{and}\quad c\in [G,G].$$
\end{thm}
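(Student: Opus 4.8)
The plan is to divide out the character $\chi_{1}$ and so reduce (\ref{Eq}) to a single additive-type equation. Since a character is nowhere zero, I set $F:=f/\chi_{1}$ and $\rho:=\chi_{2}/\chi_{1}$; then $\rho$ is again a character, with $\rho=1$ precisely when $\chi_{1}=\chi_{2}$. Dividing (\ref{Eq}) by $\chi_{1}(xy)=\chi_{1}(x)\chi_{1}(y)$ turns it into
$$F(xy)=F(x)+\rho(x)F(y),\quad x,y\in G.\qquad(\ast)$$
Conversely, every $F$ solving $(\ast)$ gives a solution $f=\chi_{1}F$ of (\ref{Eq}), so it suffices to find all $F$. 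Part (1) is then immediate: if $\chi_{1}=\chi_{2}$ then $\rho=1$ and $(\ast)$ says $F$ is additive, whence $f=\chi_{1}A$ with $A:=F$, and the reverse inclusion is the direct check already indicated in the excerpt's motivation.

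For Part (2) assume $\rho\neq1$. First I would record the elementary consequences of $(\ast)$, namely $F(e)=0$ and $F(x^{-1})=-\rho(x)^{-1}F(x)$. Feeding these back into $(\ast)$ yields the conjugation formula
$$F(xyx^{-1})=\rho(x)F(y)+(1-\rho(y))F(x),$$
and then, using $\rho(y^{-1})=\rho(y)^{-1}$ and $\rho(xyx^{-1})=\rho(y)$, the commutator formula
$$F([x,y])=(1-\rho(y))F(x)-(1-\rho(x))F(y).\qquad(\star)$$
Two structural facts fall out. Every character is trivial on $[G,G]$, so $\rho\equiv1$ there and $(\ast)$ shows that $A:=F|_{[G,G]}$ is additive; and the conjugation formula applied to $c\in[G,G]$ (where $\rho(c)=1$) gives the transformation property $A(xcx^{-1})=\rho(x)A(c)=\tfrac{\chi_{2}(x)}{\chi_{1}(x)}A(c)$.

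To recover $F$ on all of $G$, fix $y_{0}$ with $\rho(y_{0})\neq1$, i.e. $\chi_{1}(y_{0})\neq\chi_{2}(y_{0})$. Taking $(\star)$ with the pair $(y_{0},x)$ and solving for $F(x)$ gives
$$F(x)=\alpha(1-\rho(x))-\frac{1}{1-\rho(y_{0})}F([y_{0},x]),\quad \alpha:=\frac{F(y_{0})}{1-\rho(y_{0})};$$
absorbing the scalar $-1/(1-\rho(y_{0}))$ into $A$ (a rescaling that preserves both additivity and the transformation property) and multiplying by $\chi_{1}$ produces exactly $f(x)=\alpha(\chi_{1}(x)-\chi_{2}(x))+A([y_{0},x])\chi_{1}(x)$, so every solution has the asserted form. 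For the converse I would verify that each such $f$ solves (\ref{Eq}): the summand $\alpha(\chi_{1}-\chi_{2})$ is checked directly, and after dividing the commutator summand by $\chi_{1}(xy)$ it remains to prove $A([y_{0},xy])=A([y_{0},x])+\rho(x)A([y_{0},y])$, which follows from the identity $[y_{0},xy]=[y_{0},x]\cdot x[y_{0},y]x^{-1}$ combined with the additivity of $A$ and its transformation property.

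I expect the main obstacle to lie entirely in Part (2): deriving the conjugation and commutator formulas $(\star)$ with the correct signs and inverse factors, and above all pinning down the transformation property $A(xcx^{-1})=\rho(x)A(c)$ and showing it is simultaneously forced by any solution and sufficient (through the commutator identity) to produce one. The reduction $(\ast)$ and Part (1) are routine; the delicate part is the nonabelian bookkeeping with conjugates, inverses and commutators that isolates the additive function on $[G,G]$.
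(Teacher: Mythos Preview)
The paper does not supply its own proof of this theorem; it is quoted as a preliminary result from Stetk{\ae}r \cite{Stetkaer2} with only a citation, so there is nothing here to compare your argument against.

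That said, your proposal is correct and is essentially the standard route: the reduction $F=f/\chi_{1}$, $\rho=\chi_{2}/\chi_{1}$ turning (\ref{Eq}) into $(\ast)$; Part (1) as the pure additive case $\rho=1$; and in Part (2) the derivation of the conjugation and commutator identities, the observation that $\rho\equiv1$ on $[G,G]$ forces $A:=F|_{[G,G]}$ to be additive with the transformation property $A(xcx^{-1})=\rho(x)A(c)$, and the reconstruction of $F$ via $(\star)$ evaluated at $(y_{0},x)$. The converse check through the commutator identity $[y_{0},xy]=[y_{0},x]\cdot x[y_{0},y]x^{-1}$ together with additivity and the transformation property of $A$ is exactly right. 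Your proposal would stand as a self-contained proof; the only cosmetic point is to be explicit that the rescaled $A$ still ranges over \emph{all} additive maps on $[G,G]$ with the transformation property (since the scalar $-1/(1-\rho(y_{0}))$ is nonzero), so that the parametrisation in the statement is surjective as well as necessary.
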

\begin{rem} According to \cite[Proposition 4]{Stetkaer2}, if
$\chi_{1}\neq\chi_{2}$ the central solutions of (\ref{Eq}) are the
functions $f=\alpha(\chi_{1}-\chi_{2})$ where $\alpha$ ranges over
$\mathbb{C}$.
\end{rem}
\begin{lem}\label{lem53} Let $G$ be a group. Let $(f,g_{1},h_{1},h_{2},h)$ be a solution
of the functional equation (\ref{eq51}).\\
(1) If $h_{1}=0$ then $f=h=b\chi$, $h_{2}=0$ and $g_{1}$ is arbitrary, where $b\in\mathbb{C}$ is a constant.\\
(2) If $h_{1}\neq0$ and the set $\{f,\mu_{1},\mu_{2},\chi\}$ is
linearly dependent then the set $\{g_{1},\mu_{1},\mu_{2},\chi\}$ is
also linearly dependent.
\end{lem}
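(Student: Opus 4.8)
The plan is to treat the two parts separately, in both cases exploiting that the three distinct characters $\mu_1,\mu_2,\chi$ are linearly independent by Proposition \ref{prop51}.

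For part (1) I would set $h_1=0$, so that (\ref{eq51}) collapses to $f(xy)=\frac{1}{2}(\mu_1(x)+\mu_2(x))h_2(y)+\chi(x)h(y)$; since $g_1$ no longer occurs it is automatically arbitrary. The key device is associativity: I would compute $f(xyz)$ both as $f((xy)z)$ and as $f(x(yz))$ and equate the two expressions. Expanding by multiplicativity of $\mu_1,\mu_2,\chi$ on the left factor and reading the result as a relation among the functions $x\mapsto\mu_1(x),\mu_2(x),\chi(x)$, linear independence lets me match coefficients. This yields $h_2(yz)=\mu_1(y)h_2(z)$ and $h_2(yz)=\mu_2(y)h_2(z)$ simultaneously, together with $h(yz)=\chi(y)h(z)$. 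Subtracting the first two gives $(\mu_1(y)-\mu_2(y))h_2(z)=0$ for all $y,z$; as $\mu_1\neq\mu_2$ I may pick $y$ with $\mu_1(y)\neq\mu_2(y)$ and conclude $h_2=0$. Setting $z=e$ in $h(yz)=\chi(y)h(z)$ gives $h=b\chi$ with $b:=h(e)$, and substituting $h_2=0$, $h=b\chi$ back into the reduced equation yields $f(xy)=b\chi(xy)$, hence $f=b\chi=h$.

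For part (2) I would use that linear dependence of $\{f,\mu_1,\mu_2,\chi\}$ together with linear independence of $\{\mu_1,\mu_2,\chi\}$ forces $f$ into their span, say $f=a_1\mu_1+a_2\mu_2+a_3\chi$. Substituting this into (\ref{eq51}) and expanding $f(xy)$ by multiplicativity, I would isolate the term $g_1(x)h_1(y)$, obtaining
\[g_1(x)h_1(y)=\mu_1(x)\Big[a_1\mu_1(y)-\tfrac{1}{2}h_2(y)\Big]+\mu_2(x)\Big[a_2\mu_2(y)-\tfrac{1}{2}h_2(y)\Big]+\chi(x)\Big[a_3\chi(y)-h(y)\Big].\]
Since $h_1\neq0$, I can fix $y_0$ with $h_1(y_0)\neq0$ and divide, which exhibits $g_1$ as an explicit linear combination of $\mu_1,\mu_2,\chi$; therefore $\{g_1,\mu_1,\mu_2,\chi\}$ is linearly dependent.

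The genuinely substantive step is the coefficient matching in part (1): one must check that for each fixed pair $(y,z)$ the identity in $x$ is a vanishing linear combination of $\mu_1,\mu_2,\chi$, so that Proposition \ref{prop51} applies and delivers the three scalar identities. Everything afterwards, and the whole of part (2), is routine algebra; part (2) in particular reduces to a single substitution once one observes that dependence of $\{f,\mu_1,\mu_2,\chi\}$ means $f\in\mathrm{span}\{\mu_1,\mu_2,\chi\}$.
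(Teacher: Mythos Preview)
Your proof of part (2) is essentially identical to the paper's: write $f$ as a combination of $\mu_1,\mu_2,\chi$, substitute into (\ref{eq51}), isolate $g_1(x)h_1(y)$, and evaluate at some $y_0$ with $h_1(y_0)\neq0$.

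For part (1) your argument is correct but takes a different route. The paper does not use associativity; instead it sets $y=e$ in the reduced equation to obtain $f=a\mu_1+a\mu_2+b\chi$ with $a=h_2(e)/2$, $b=h(e)$, substitutes this expression for $f$ back into (\ref{eq51}), and then matches coefficients of $\mu_1(x),\mu_2(x),\chi(x)$ to get $h_2/2=a\mu_1=a\mu_2$ and $h=b\chi$, whence $a=0$. Your method of computing $f(xyz)$ two ways and matching coefficients in $x$ yields the same conclusions via the intermediate identities $h_2(yz)=\mu_1(y)h_2(z)=\mu_2(y)h_2(z)$ and $h(yz)=\chi(y)h(z)$. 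The paper's approach is a touch shorter since it avoids the triple product, but your associativity argument has the minor advantage of not needing to first determine the form of $f$; both are standard and equally valid here.
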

\begin{proof} (1) Assume that $h_{1}=0$. Then $g_{1}$ is arbitrary. By putting $y=e$ in
(\ref{eq51}), we get that $f=a\mu_{1}+a\mu_{2}+b\chi$ where
$a:=\dfrac{h_{2}(e)}{2}$ and $b:=h(e)$. Substituting this back into
(\ref{eq51}) we obtain
\begin{equation*}\begin{split}&a\mu_{1}(y)\mu_{1}(x)+a\mu_{2}(y)\mu_{2}(x)+b\chi(y)\chi(x)\\
&=\dfrac{h_{2}(y)}{2}\mu_{1}(x)+\dfrac{h_{2}(y)}{2}\mu_{2}(x)+h(y)\chi(x),\end{split}\end{equation*}
for all $x,y\in G$. Since the characters $\mu_{1}$, $\mu_{2}$ and
$\chi$ are different the set $\{\mu_{1},\mu_{2},\chi\}$ is linearly
independent. Then we get, from the identity above, that
$\dfrac{h_{2}}{2}=a\mu_{1}=a\mu_{2}$ and $h=b\chi$. So that $a=0$
because $\mu_{1}\neq\mu_{2}$.
Hence, $f=h=b\chi$ and $h_{2}=0$.\\
(2) Assume that $h_{1}\neq0$ and $\{f,\mu_{1},\mu_{2},\chi\}$ is
linearly dependent. Since the set $\{\mu_{1},\mu_{2},\chi\}$ is
linearly independent there exist
$\alpha_{1},\alpha_{2},\alpha_{3}\in\mathbb{C}$ such that
$f=\alpha_{1}\mu_{1}+\alpha_{2}\mu_{2}+\alpha_{3}\chi$. So that
$$f(xy)=\alpha_{1}\mu_{1}(x)\mu_{1}(y)+\alpha_{2}\mu_{2}(x)\mu_{2}(y)+\alpha_{3}\chi(x)\chi(y)$$
for all $x,y\in G$, from which we get, taking (\ref{eq51}) into
account, that
\begin{equation*}\begin{split}&g_{1}(x)h_{1}(y)=[\alpha_{1}\mu_{1}(y)-\dfrac{1}{2}h_{2}(y)]\mu_{1}(x)+[\alpha_{2}\mu_{2}(y)-\dfrac{1}{2}h_{2}(y)]\mu_{2}(x)\\
&+[\alpha_{3}\chi(y)-h(y)]\chi(x)\end{split}\end{equation*} for all
$x,y\in G$. As $h_{1}\neq0$ we deduce from the identity above, by
choosing $y_{0}\in G$ such that $h_{1}(y_{0})\neq0$, that the set
$\{g_{1},\mu_{1},\mu_{2},\chi\}$ is linearly dependent.
\end{proof}
\begin{prop}\label{prop54} Let $G$ be a monoid. For $f=0$ the solutions
of the functional equation (\ref{equ51}) can be listed as follows\\
(1) $g_{1}$ arbitrary and $h_{1}=h_{2}=h=0$;\\
(2) $g_{1}=a\dfrac{\mu+\chi}{2}+b\,\chi\,A$, $h_{1}\neq0$ arbitrary,
$h_{2}=-ah_{1}$ and $h=-bh_{1}$, where $a$ and $b$ range over
$\mathbb{C}$.
\end{prop}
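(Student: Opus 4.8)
The plan is to substitute $f=0$ into (\ref{equ51}) and split into two cases according to whether $h_{1}$ vanishes. With $f=0$ and $g=\dfrac{\mu+\chi}{2}$, equation (\ref{equ51}) reads
$$0=g_{1}(x)h_{1}(y)+\dfrac{\mu(x)+\chi(x)}{2}h_{2}(y)+\chi(x)A(x)h(y),\quad x,y\in G.$$
The crucial preliminary observation, which I would establish first, is that the set $\{\mu,\chi,\chi A\}$ is linearly independent. Indeed, in any relation $a\mu+b\chi+c\chi A=0$ the coefficient $c$ must vanish: otherwise $\chi A$ would be a linear combination of the multiplicative functions $\mu$ and $\chi$, which forces $\chi A=0$ by Lemma \ref{lem52}, contradicting the hypothesis $\chi A\neq0$. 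Once $c=0$, Proposition \ref{prop51} applied to the two distinct nonzero multiplicative functions $\mu,\chi$ gives $a=b=0$. In particular $\{\dfrac{\mu+\chi}{2},\chi A\}$ is also linearly independent.

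For part (1), suppose $h_{1}=0$. Then $g_{1}$ is unconstrained, and the displayed equation becomes $\dfrac{h_{2}(y)}{2}\mu(x)+\dfrac{h_{2}(y)}{2}\chi(x)+h(y)\chi(x)A(x)=0$ for all $x,y\in G$. Fixing $y$ and using the linear independence of $\{\mu,\chi,\chi A\}$ yields $h_{2}(y)=h(y)=0$; since $y$ is arbitrary this gives $h_{2}=h=0$, which is exactly case (1).

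For part (2), suppose $h_{1}\neq0$ and pick $y_{0}\in G$ with $h_{1}(y_{0})\neq0$. Solving the displayed equation for $g_{1}$ shows that $g_{1}=a\dfrac{\mu+\chi}{2}+b\chi A$, where $a:=-h_{2}(y_{0})/h_{1}(y_{0})$ and $b:=-h(y_{0})/h_{1}(y_{0})$. Substituting this expression for $g_{1}$ back into the displayed equation and regrouping gives
$$\dfrac{\mu(x)+\chi(x)}{2}\big[ah_{1}(y)+h_{2}(y)\big]+\chi(x)A(x)\big[bh_{1}(y)+h(y)\big]=0$$
for all $x,y\in G$. For each fixed $y$, the linear independence of $\{\dfrac{\mu+\chi}{2},\chi A\}$ forces both bracketed coefficients to vanish, so $h_{2}=-ah_{1}$ and $h=-bh_{1}$ with $h_{1}$ arbitrary nonzero, which is case (2). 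The only genuinely delicate point is the linear-independence step, since that is where the standing hypothesis $\chi A\neq0$ enters through Lemma \ref{lem52}; the remainder is routine bookkeeping, and the converse (that the listed functions indeed satisfy the equation) is immediate by direct substitution.
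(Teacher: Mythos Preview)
Your proof is correct and follows essentially the same line as the paper's: both split into the cases $h_{1}=0$ and $h_{1}\neq0$ and exploit the linear independence of $\{\mu,\chi,\chi A\}$. The only cosmetic difference is that in case (2) the paper first evaluates at $x=e$ (using $A(e)=0$) to obtain $h_{2}=-a h_{1}$ with $a:=g_{1}(e)$ before choosing $y_{0}$, whereas you extract both constants $a,b$ directly from a single point $y_{0}$; your added justification of the linear independence via Lemma~\ref{lem52} and Proposition~\ref{prop51} is a welcome clarification that the paper leaves implicit.
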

\begin{proof} (1) Assume that $h_{1}=0$. Then $g_{1}$ is arbitrary. Let $y\in G$ be arbitrary.
Since $f=0$ the functional equation (\ref{equ51}) implies
$$\dfrac{h_{2}(y)}{2}\mu(x)+\dfrac{h_{2}(y)}{2}\chi(x)=-h(y)\chi(x)A(x)$$ for all $x\in G$.
So, the set
$\{\mu,\chi,\chi\,A\}$ being linearly independent, $h_{2}(y)=h(y)=0$ and $y$ arbitrary, we deduce that $h_{2}=h=0$. \\
(2) Assume that $h_{1}\neq0$. Taking $x=e$ in (\ref{equ51}) and
seeing that $\mu(e)=\chi(e)=1$ and $A(e)=0$ we get that
$g_{1}(e)h_{1}(y)+h_{2}(y)=0$ for all $y\in G$. So that
\begin{equation}\label{equ53}h_{2}=-ah_{1}\end{equation} where $a:=g_{1}(e)$. Substituting (\ref{equ53}) back into
(\ref{equ51}) we get that
$$(g_{1}(x)-a\dfrac{\mu(x)+\chi(x)}{2})h_{1}(y)+h(y)\chi(x)A(x)=0$$
for all $x,y\in G$. Choosing $y_{0}\in G$ such that
$h_{1}(y_{0})\neq0$ we deduce from the last functional equation that
\begin{equation}\label{equ53}g_{1}=a\dfrac{\mu+\chi}{2}+b\,\chi\,A\end{equation} where
$b:=-\dfrac{h(y_{0})}{h_{1}(y_{0})}$.\\
Substituting (\ref{equ53}) and (\ref{equ53}) back into (\ref{equ51})
we get that
$$(bh_{1}(y)+h(y))\chi(x)A(x)=0$$ for all $x,y\in G$. Since
$\chi\,A\neq0$ we derive that $$h=-bh_{1}.$$ Conversely, it is easy
to check that the formulas of (1) and (2) define solutions of
(\ref{equ51}) for $f=0$.
\end{proof}
\begin{prop}\label{prop55} Let $G$ be a group and let $\alpha\in\mathbb{C}$ be a constant. For $f=\alpha\mu$ the solutions
of the functional equation (\ref{Eq.5-1}) can be listed as follows\\
(1) $g_{1}$ arbitrary, $h_{1}=0$, $h_{2}=\alpha\mu$ and $h=0$;\\
(2) $g_{1}=a\mu+b\chi\,A$, $h_{1}\neq0$ arbitrary,
$h_{2}=-ah_{1}+\alpha\mu$ and $h=-bh_{1}$, where $a$ and $b$ range
over $\mathbb{C}$.
\end{prop}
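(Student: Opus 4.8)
The plan is to follow the template of the proof of Proposition~\ref{prop54}, the essential tool being the linear independence of the set $\{\mu,\chi,\chi\,A\}$. To establish this independence I would suppose $c_{1}\mu+c_{2}\chi+c_{3}\chi\,A=0$ with $c_{1},c_{2},c_{3}\in\mathbb{C}$ and argue as follows: if $c_{3}\neq0$ then $\chi\,A$ equals a linear combination of the multiplicative functions $\mu$ and $\chi$, so Lemma~\ref{lem52} forces $\chi\,A=0$, contradicting $\chi\,A\neq0$ (which holds because $A\neq0$ and $\chi$ is a character). Hence $c_{3}=0$, and then $c_{1}\mu+c_{2}\chi=0$ gives $c_{1}=c_{2}=0$ by Proposition~\ref{prop51}, since $\mu\neq\chi$.

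Next I would insert $f=\alpha\mu$ into (\ref{Eq.5-1}) and use $\mu(xy)=\mu(x)\mu(y)$ to rewrite the equation as
$$\alpha\mu(x)\mu(y)=g_{1}(x)h_{1}(y)+\mu(x)h_{2}(y)+\chi(x)A(x)h(y),\quad x,y\in G,$$
and then split the analysis according to whether $h_{1}=0$ or $h_{1}\neq0$. In the case $h_{1}=0$ the function $g_{1}$ is arbitrary and, viewing the displayed identity for each fixed $y$ as a relation in the variable $x$ between $\mu$ and $\chi\,A$, the independence established above yields $\alpha\mu(y)-h_{2}(y)=0$ and $h(y)=0$, i.e. $h_{2}=\alpha\mu$ and $h=0$, which is part~(1).

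For the case $h_{1}\neq0$, I would first set $x=e$ and use $\mu(e)=\chi(e)=1$, $A(e)=0$ to obtain $g_{1}(e)h_{1}(y)+h_{2}(y)=\alpha\mu(y)$, that is $h_{2}=-ah_{1}+\alpha\mu$ with $a:=g_{1}(e)$. Substituting this back and cancelling the $\alpha\mu(x)\mu(y)$ term leaves
$$(g_{1}(x)-a\mu(x))h_{1}(y)+h(y)\chi(x)A(x)=0,\quad x,y\in G.$$
Choosing $y_{0}$ with $h_{1}(y_{0})\neq0$ gives $g_{1}=a\mu+b\,\chi\,A$ with $b:=-h(y_{0})/h_{1}(y_{0})$; feeding this back into the last identity and invoking $\chi\,A\neq0$ (picking $x_{0}$ with $\chi(x_{0})A(x_{0})\neq0$) yields $h=-bh_{1}$, which is part~(2). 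The converse amounts to substituting each family back into (\ref{Eq.5-1}) and checking the terms telescope to $\alpha\mu(xy)$.

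The only genuinely delicate point is the linear independence of $\{\mu,\chi,\chi\,A\}$: this is exactly where Lemma~\ref{lem52} is indispensable, since it is what prevents the additive twist $\chi\,A$ from collapsing into the span of the two characters. Once that is in place, the remaining steps are the same elementary bookkeeping as in Proposition~\ref{prop54}.
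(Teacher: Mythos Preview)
Your proof is correct and follows exactly the approach the paper intends: the paper's own proof of Proposition~\ref{prop55} consists of the single line ``By proceeding as in the proof of Proposition~\ref{prop54}'', and your write-up is precisely that adaptation, with the same case split on $h_{1}=0$ versus $h_{1}\neq0$, the same evaluation at $x=e$, and the same use of the linear independence of $\{\mu,\chi,\chi\,A\}$. Your explicit justification of that independence via Lemma~\ref{lem52} and Proposition~\ref{prop51} is the same as what the paper invokes elsewhere (e.g.\ in the proof of Theorem~\ref{thm56}).
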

\begin{proof} By proceeding as in the proof of Proposition
\ref{prop54}.
\end{proof}
\section{Solutions of Eq. (\ref{eq52}) and Eq. (\ref{eq51})}
In this section $\mu_{1},\,\mu_{2}$ and $\chi$ are different
characters of a group $G$, and $g:=\dfrac{\mu_{1}+\mu_{2}}{2}$.
\subsection{Solutions of Eq. (\ref{eq52})}\quad\\\\
In this subsection we solve the functional equation (\ref{eq52}),
i.e.,
$$f(xy)=f(x)h_{1}(y)+g(x)h_{2}(y)+\chi(x)h(y),\,x,y\in G,$$
where $f,h_{1},h_{2},h:G\to\mathbb{C}$ are the unknown functions to
be determined.
\par Proposition \ref{prop56} solves the functional equation (\ref{eq52}) when the set $\{f,\mu_{1},\mu_{2},\chi\}$ is
linearly independent.
\begin{prop}\label{prop56} Let $(f,h_{1},h_{2},h)$ be a solution of
(\ref{eq52}) such that the set $\{f,\mu_{1},\mu_{2},\chi\}$ is
linearly independent. Then
$$f=c\mu+\varphi_{\mu,\chi},\,h_{1}=\mu,\,h_{2}=0\,\,\text{and}\,\,h=\varphi_{\mu,\chi},$$
where $\mu$ is a character of $G$ and $c\in\mathbb{C}$ is a
constant.
\end{prop}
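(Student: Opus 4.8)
The plan is to exploit associativity of the product in $G$ by computing $f(xyz)$ in the two ways $f((xy)z)$ and $f(x(yz))$, and then to separate variables using the hypothesis that $\{f,\mu_{1},\mu_{2},\chi\}$ is linearly independent. This is the standard strategy for Levi-Civita type equations, and the linear independence hypothesis is exactly what is needed to pass from a single scalar identity in three variables to several identities in two variables.

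First I would substitute (\ref{eq52}) into itself: I write $f((xy)z)=f(xy)h_{1}(z)+g(xy)h_{2}(z)+\chi(xy)h(z)$, expand $f(xy)$ by (\ref{eq52}) and use $\chi(xy)=\chi(x)\chi(y)$, and compare the result with $f(x(yz))=f(x)h_{1}(yz)+g(x)h_{2}(yz)+\chi(x)h(yz)$. The one delicate point is that $g=\tfrac{\mu_{1}+\mu_{2}}{2}$ is \emph{not} multiplicative, so $g(xy)$ does not factor as $g(x)g(y)$; I would therefore replace $g(x)$ and $g(xy)$ throughout by $\tfrac12(\mu_{1}(x)+\mu_{2}(x))$ and $\tfrac12(\mu_{1}(x)\mu_{1}(y)+\mu_{2}(x)\mu_{2}(y))$, so that for fixed $y,z$ the identity becomes a linear relation among the functions $f(x),\mu_{1}(x),\mu_{2}(x),\chi(x)$ of $x$. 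Linear independence then lets me equate coefficients, yielding four relations in $(y,z)$: (i) $h_{1}(yz)=h_{1}(y)h_{1}(z)$; (ii) $h_{2}(yz)=h_{2}(y)h_{1}(z)+\mu_{1}(y)h_{2}(z)$; (iii) $h_{2}(yz)=h_{2}(y)h_{1}(z)+\mu_{2}(y)h_{2}(z)$; (iv) $h(yz)=h(y)h_{1}(z)+\chi(y)h(z)$.

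The key step, which I expect to be the crux, is the comparison of (ii) and (iii): subtracting gives $(\mu_{1}(y)-\mu_{2}(y))h_{2}(z)=0$ for all $y,z$, and since $\mu_{1}\neq\mu_{2}$ this forces $h_{2}=0$. Relation (i) then says $h_{1}$ is multiplicative. I would next rule out $h_{1}=0$: if $h_{1}=0$, then (\ref{eq52}) with $h_{2}=0$ reduces to $f(xy)=\chi(x)h(y)$, whence $x=e$ gives $f=h$ and then $y=e$ gives $f=f(e)\chi$, contradicting the linear independence of $\{f,\mu_{1},\mu_{2},\chi\}$. Hence $h_{1}$ is a nonzero multiplicative function on a group, i.e. a character $\mu:=h_{1}$, and (iv) becomes $h(xy)=h(x)\mu(y)+\chi(x)h(y)$, so that $h=\varphi_{\mu,\chi}$ by the definition of $\varphi_{\chi_{1},\chi_{2}}$.

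Finally, with $h_{2}=0$ and $h_{1}=\mu$, equation (\ref{eq52}) reads $f(xy)=f(x)\mu(y)+\chi(x)h(y)$. Setting $F:=f-h$ and using $h(xy)=h(x)\mu(y)+\chi(x)h(y)$, I obtain $F(xy)=F(x)\mu(y)$, and putting $x=e$ gives $F=F(e)\mu$. Thus $f=c\mu+\varphi_{\mu,\chi}$ with $c:=F(e)$, which together with $h_{1}=\mu$, $h_{2}=0$ and $h=\varphi_{\mu,\chi}$ gives all four required identities. The converse, that every such quadruple solves (\ref{eq52}), is a routine direct verification that I would record at the end.
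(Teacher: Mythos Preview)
Your proposal is correct and follows essentially the same route as the paper: both use associativity to compare $f(x(yz))$ with $f((xy)z)$, split $g$ into $\tfrac12\mu_{1}+\tfrac12\mu_{2}$, and read off the four relations from the linear independence of $\{f,\mu_{1},\mu_{2},\chi\}$; the subtraction of the two $h_{2}$-relations to force $h_{2}=0$ is exactly the paper's key step. The only cosmetic differences are that the paper rules out $h_{1}=0$ by reducing relation (iv) rather than the original equation, and recovers $f$ by setting $x=e$ directly in $f(xy)=f(x)\mu(y)+\chi(x)\varphi_{\mu,\chi}(y)$ instead of introducing $F=f-h$.
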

\begin{proof}
Let $x,y,z\in G$ be arbitrary. We compute $f(xyz)$ as $f(x(yz))$ and
then as $f((xy)z)$. Using Eq. (\ref{eq52}) and that
$g=\dfrac{\mu_{1}+\mu_{2}}{2}$ we obtain
\begin{equation}\label{eq54}f(x(yz))=f(x)h_{1}(yz)+\dfrac{1}{2}\mu_{1}(x)h_{2}(yz)+\dfrac{1}{2}\mu_{2}(x)h_{2}(yz)+\chi(x)h(yz).\end{equation}
On the other hand, by applying Eq. (\ref{eq52}) to the pair $(xy,z)$
we get that
\begin{equation*}\begin{split}f((xy)z)=f(xy)h_{1}(z)+g(xy)h_{2}(z)+\chi(xy)h(z)\quad\quad\quad\quad\quad\quad\quad\quad\quad\quad\quad\quad\quad\quad\quad\\
=[f(x)h_{1}(y)+\dfrac{1}{2}\mu_{1}(x)h_{2}(y)+\dfrac{1}{2}\mu_{2}(x)h_{2}(y)+\chi(x)h(y)]h_{1}(z)+\dfrac{1}{2}\mu_{1}(x)\mu_{1}(y)h_{2}(z)\\
+\dfrac{1}{2}\mu_{2}(x)\mu_{2}(y)h_{2}(z)+\chi(x)\chi(y)h(z).\quad\quad\quad\quad\quad\quad\quad\quad\quad\quad\quad
\quad\quad\quad\quad\quad\quad\quad\quad\quad\end{split}\end{equation*}
So that
\begin{equation}\label{eq55}\begin{split}f((xy)z)&=f(x)h_{1}(y)h_{1}(z)+\dfrac{1}{2}\mu_{1}(x)[h_{2}(y)h_{1}(z)+\mu_{1}(y)h_{2}(z)]\\
&+\dfrac{1}{2}\mu_{2}(x)[h_{2}(y)h_{1}(z)+\mu_{2}(y)h_{2}(z)]\\&+\chi(x)[h(y)h_{1}(z)+\chi(y)h(z)].\end{split}\end{equation}
As the set $\{f,\mu_{1},\mu_{2},\chi\}$ is linearly independent and
$x,y,z$ are arbitrary, we derive from (\ref{eq54}) and (\ref{eq55})
that $(h_{1},h_{2},h)$ is a solution of the following functional
equations as a system
\begin{equation}\label{eq56}h_{1}(xy)=h_{1}(x)h_{1}(y),\end{equation}
\begin{equation}\label{eq57}h_{2}(xy)=h_{2}(x)h_{1}(y)+\mu_{1}(x)h_{2}(y),\end{equation}
\begin{equation}\label{eq58}h_{2}(xy)=h_{2}(x)h_{1}(y)+\mu_{2}(x)h_{2}(y)\end{equation}
and
\begin{equation}\label{eq58-1}h(xy)=h(x)h_{1}(y)+\chi(x)h(y)\end{equation}
for all $x,y\in G$. The functional equation (\ref{eq56}) implies
that $h_{1}=\mu$ where $\mu:G\to\mathbb{C}$ is a multiplicative
function. By subtracting (\ref{eq57}) from (\ref{eq58}) we get that
$$[\mu_{1}(x)-\mu_{2}(x)]h_{2}(y)=0$$ for all $x,y\in G$. Since $\mu_{1}\neq\mu_{2}$ we have $h_{2}(y)=0$. So, $y$ being
arbitrary, we deduce that $h_{2}=0$.
\par If $\mu=0$, then $h_{1}=0$ and the
functional equation (\ref{eq58-1}) reduces to
$$h(xy)=\chi(x)h(y)$$ for all $x,y\in G$, which yields, by putting
$y=e$, that $h=\alpha\chi\,\,\text{where}\,\,\alpha:=h(e)$. Thus the
functional equation (\ref{eq52}) becomes $f(xy)=\alpha\chi(xy)$ for
all $x,y\in G$. This implies, by putting $y=e$, that $f=\alpha\chi$,
which contradicts the linear independence of the set
$\{f,\mu_{1},\mu_{2},\chi\}$.
\par Hence $\mu\neq0$ and then $\mu$ is a character of $G$. So, the
functional equation (\ref{eq58-1}) implies that
$$h=\varphi_{\mu,\chi}.$$ So that the functional equation
(\ref{eq52}) reduces to
$$f(xy)=f(x)\mu(y)+\chi(x)\varphi_{\mu,\chi}(y)$$
for all $x,y\in G$, from which we get, by putting $x=e$ and
$c=f(e)$, that $f=c\mu+\varphi_{\mu,\chi}$.
\end{proof}
\begin{thm}\label{thm56} The solutions $(f,h_{1},h_{2},h)$ of
(\ref{eq52}) are:\\
(1) $f=c\mu+\varphi_{\mu,\chi},\,h_{1}=\mu,\,h_{2}=0$ and
$h=\varphi_{\mu,\chi}$, where $\mu$ is a character of $G$ and
$c\in\mathbb{C}$ is a constant;\\
(2) $f=c\chi,\,h_{1}\,\,\text{is arbitrary},\,h_{2}=0\,\,\text{and}\,\,h=c(\chi-h_{1})$, where $c\in\mathbb{C}$ is a constant;\\
(3)$$f=a\mu_{1}+b\mu_{2}+c\chi,\,\,h_{1}=\dfrac{1}{a-b}(a\mu_{1}-b\mu_{2}),$$
$$h_{2}=-\dfrac{2ab}{a-b}(\mu_{1}-\mu_{2}),\,\,h=\dfrac{c}{a-b}(-a\mu_{1}+b\mu_{2}+(a-b)\chi),$$
where $a,b,c\in\mathbb{C}$ are constants such that $a\neq b$.
\end{thm}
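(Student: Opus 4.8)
The plan is to reduce everything to a single coefficient comparison, after splitting off the case already handled. Proposition \ref{prop56} settles the situation in which $\{f,\mu_{1},\mu_{2},\chi\}$ is linearly independent, and it yields precisely family (1). So I would begin by citing Proposition \ref{prop56} for that case and then assume for the remainder that $\{f,\mu_{1},\mu_{2},\chi\}$ is linearly \emph{dependent}. Since $\mu_{1},\mu_{2},\chi$ are distinct characters, they are linearly independent by Proposition \ref{prop51}, so the dependence forces $f$ into their span: $f=\alpha_{1}\mu_{1}+\alpha_{2}\mu_{2}+\alpha_{3}\chi$ for some $\alpha_{1},\alpha_{2},\alpha_{3}\in\mathbb{C}$.

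Next I would substitute this $f$ into (\ref{eq52}), writing $f(xy)=\alpha_{1}\mu_{1}(x)\mu_{1}(y)+\alpha_{2}\mu_{2}(x)\mu_{2}(y)+\alpha_{3}\chi(x)\chi(y)$ on the left and expanding the right using $g=\tfrac{\mu_{1}+\mu_{2}}{2}$. For each fixed $y$ both sides are linear combinations of $\mu_{1}(x),\mu_{2}(x),\chi(x)$, so the linear independence of $\{\mu_{1},\mu_{2},\chi\}$ lets me match coefficients, giving for all $y\in G$ the three scalar identities
\begin{equation*}\alpha_{1}\mu_{1}(y)=\alpha_{1}h_{1}(y)+\tfrac{1}{2}h_{2}(y),\quad \alpha_{2}\mu_{2}(y)=\alpha_{2}h_{1}(y)+\tfrac{1}{2}h_{2}(y),\quad \alpha_{3}\chi(y)=\alpha_{3}h_{1}(y)+h(y).\end{equation*}
Subtracting the first two eliminates $h_{2}$ and yields $\alpha_{1}\mu_{1}-\alpha_{2}\mu_{2}=(\alpha_{1}-\alpha_{2})h_{1}$, which is the pivot of the whole argument.

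Now I would branch on whether $\alpha_{1}=\alpha_{2}$. If $\alpha_{1}=\alpha_{2}$, the pivot reads $\alpha_{1}(\mu_{1}-\mu_{2})=0$, and since $\mu_{1}\neq\mu_{2}$ this forces $\alpha_{1}=\alpha_{2}=0$; the first identity then gives $h_{2}=0$, while the third leaves $h_{1}$ completely free and produces $h=\alpha_{3}(\chi-h_{1})$, i.e. family (2) with $c=\alpha_{3}$. If $\alpha_{1}\neq\alpha_{2}$, I can solve the pivot for $h_{1}=\tfrac{1}{\alpha_{1}-\alpha_{2}}(\alpha_{1}\mu_{1}-\alpha_{2}\mu_{2})$, then back-substitute into the first identity to obtain $h_{2}=-\tfrac{2\alpha_{1}\alpha_{2}}{\alpha_{1}-\alpha_{2}}(\mu_{1}-\mu_{2})$ and into the third to obtain $h=\tfrac{\alpha_{3}}{\alpha_{1}-\alpha_{2}}(-\alpha_{1}\mu_{1}+\alpha_{2}\mu_{2}+(\alpha_{1}-\alpha_{2})\chi)$; with $a=\alpha_{1}$, $b=\alpha_{2}$, $c=\alpha_{3}$ this is exactly family (3). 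I would close with the converse, plugging each of the three listed quadruples back into (\ref{eq52}): family (1) is already covered by Proposition \ref{prop56}, and families (2) and (3) reduce to short character identities.

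I do not expect a genuine obstacle. The one step needing care is the bifurcation $\alpha_{1}=\alpha_{2}$ versus $\alpha_{1}\neq\alpha_{2}$, since recognizing that equality forces \emph{both} scalars to vanish, and hence that $h_{1}$ becomes a free parameter, is precisely what separates the rigid family (3) from the loose family (2). A secondary point to keep in mind is that the three families are not asserted to be disjoint: degenerate instances of family (1) (for instance $\varphi_{\mu,\chi}$ a scalar multiple of $\mu-\chi$, or $\mu=\chi$ with trivial additive part) can reappear inside families (2) or (3), so the argument only needs to show the list is exhaustive, not mutually exclusive.
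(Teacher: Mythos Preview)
Your proposal is correct and follows essentially the same approach as the paper: invoke Proposition \ref{prop56} for the independent case, write $f$ as a combination of $\mu_{1},\mu_{2},\chi$ in the dependent case, match coefficients against the linearly independent triple to obtain the three scalar identities, and then bifurcate on whether the two $\mu_{1},\mu_{2}$-coefficients agree. The only cosmetic difference is that the paper uses $a,b,c$ from the outset where you use $\alpha_{1},\alpha_{2},\alpha_{3}$.
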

\begin{proof} It is easy to check that the formulas in parts (1)-(3) define solutions of the
functional equation (\ref{eq52}), so left is to show that any
solution $(f,h_{1},h_{2},h)$ fits into (1)-(3).
\par If the set $\{f,\mu_{1},\mu_{2},\chi\}$ is linearly independent
we get, by Proposition \ref{prop56}, the part (1) of Theorem
\ref{thm56}.
\par In the remainder of the proof we assume that $\{f,\mu_{1},\mu_{2},\chi\}$ is linearly dependent.
Recall that the set $\{\mu_{1},\mu_{2},\chi\}$ is linearly
independent because $\mu_{1}$, $\mu_{2}$ and $\chi$ are different
characters of $G$. Then there exists a triple
$(a,b,c)\in\mathbb{C}^{3}$ such that
\begin{equation}\label{eq59-1}f=a\mu_{1}+b\mu_{2}+c\chi.\end{equation}
Substituting this back into (\ref{eq52}) we get by a small
computation that
\begin{equation*}\begin{split}&a\mu_{1}(y)\mu_{1}(x)+b\mu_{2}(y)\mu_{2}(x)+c\chi(y)\chi(x)\\
&=[ah_{1}(y)+\dfrac{1}{2}h_{2}(y)]\mu_{1}(x)+[bh_{1}(y)+\dfrac{1}{2}h_{2}(y)]\mu_{2}(x)+[ch_{1}(y)+h(y)]\chi(x),\end{split}\end{equation*}
for all $x,y\in G$. Since $\mu_{1}$, $\mu_{2}$ and $\chi$ are
different characters of $G$, we derive by Proposition \ref{prop51}
that $(h_{1},h_{2},h)$ is a solution of the following identities as
a system
\begin{equation}\label{eq59}ah_{1}+\dfrac{1}{2}h_{2}=a\mu_{1},\end{equation}
\begin{equation}\label{eq510}bh_{1}+\dfrac{1}{2}h_{2}=b\mu_{2}\end{equation}
and
\begin{equation}\label{eq511}ch_{1}+h=c\chi.\end{equation}
When we subtract (\ref{eq510}) from (\ref{eq59}) we obtain
\begin{equation}\label{eq512}(a-b)h_{1}=a\mu_{1}-b\mu_{2}.\end{equation}
\par If $a=b$ the identity (\ref{eq512})
reduces to $a(\mu_{1}-\mu_{2})=0$. Hence $a=b=0$ because
$\mu_{1}\neq\mu_{2}$. So, taking (\ref{eq59-1}) and (\ref{eq511})
into account, we get that
$$f=c\chi,\,h_{1}\,\,\text{is
arbitrary},\,h_{2}=0\,\,\text{and}\,\,h=c(\chi-h_{1}),$$ which is
solution (2).
\par If $a\neq b$ the identity (\ref{eq512}) implies that
$$h_{1}=\dfrac{1}{a-b}(a\mu_{1}-b\mu_{2}).$$
Substituting this back into (\ref{eq59}) and (\ref{eq511}) we derive
that
$$h_{2}=2a(\mu-h_{1})=2a[\mu_{1}-\dfrac{1}{a-b}(a\mu_{1}-b\mu_{2})]=-\dfrac{2ab}{a-b}(\mu_{1}-\mu_{2})$$
and
$$h=c(\chi-h_{1})=c[\chi-\dfrac{1}{a-b}(a\mu_{1}-b\mu_{2})]=\dfrac{c}{a-b}(-a\mu_{1}+b\mu_{2}+(a-b)\chi).$$
The solution occurs in part (3). This completes the proof.
\end{proof}
\subsection{Solutions of Eq. (\ref{eq51})}\quad\\\\
In Theorem \ref{Thm57} we solve the functional equation
(\ref{eq51}), i.e.,
$$f(xy)=g_{1}(x)h_{1}(y)+g(x)h_{2}(y)+\chi(x)h(y),\quad x,y\in G,$$ where
$f,g_{1},h_{1},h_{2},h:G\to\mathbb{C}$ are the unknown functions to
be determined.
\begin{thm}\label{Thm57} The solutions $(f,g_{1},h_{1},h_{2},h)$ of
(\ref{eq51}) can be listed as follows:\\
(1) $f=b\chi$, $g_{1}$ is arbitrary, $h_{1}=h_{2}=0$ and $h=b\chi$, where $b\in\mathbb{C}$ is a constant;\\
(2) $f=b\chi$, $g_{1}=\alpha\mu_{1}+\alpha\mu_{2}+\gamma\chi$,
$h_{1}\neq0$ is arbitrary, $h_{2}=-2\alpha h_{1}$ and
$h=b\chi-\gamma h_{1}$, where $b\in\mathbb{C}\setminus\{0\}$ and $\alpha,\gamma\in\mathbb{C}$ are constants;\\
(3)
$$f=(d_{1}-d_{2})(\dfrac{1}{2}a_{1}\mu_{1}-\dfrac{1}{2}a_{2}\mu_{2}-\dfrac{1}{2}a_{3}\chi),\,
g_{1}=\dfrac{1}{2}d_{1}\mu_{1}+\dfrac{1}{2}d_{2}\mu_{2}+d_{3}\chi,$$
$$h_{1}=a_{1}\mu_{1}+a_{2}\mu_{2},\,\,h_{2}=-a_{1}d_{2}\mu_{1}-a_{2}d_{1}\mu_{2}
\,\,\text{and}\,\,
h=-a_{1}d_{3}\mu_{1}-a_{2}d_{3}\mu_{2}-\dfrac{1}{2}a_{3}(d_{1}-d_{2})\chi,$$
where $a_{1},a_{2},a_{3},d_{1},d_{2},d_{3}\in\mathbb{C}$ are
constants such that $d_{1}\neq d_{2}$;\\
(4) $f=a_{1}b\mu+b\varphi_{\mu,\chi}$,
$g_{1}=-\frac{1}{2}a_{2}\mu_{1}-\frac{1}{2}a_{2}\mu_{2}-a_{3}\chi+a_{1}\mu+\varphi_{\mu,\chi}$,
$h_{1}=b\mu$, $h_{2}=a_{2}b\mu$ and
$h=a_{3}b\mu+b\varphi_{\mu,\chi}$, where
$b\in\mathbb{C}\setminus\{0\},a_{1},a_{2},a_{3}\in\mathbb{C}$ are
constants and $\mu$ is a character of $G$;\\
(5)
$$f=a_{1}b\chi,\,\,g_{1}=-\frac{1}{2}a_{1}\mu_{1}-\frac{1}{2}a_{2}\mu_{2}+(a_{1}-a_{2})\chi,\,\,h_{1}\,\,\text{is
arbitrary},$$
$$h_{2}=a_{2}h_{1}\,\,\text{and}\,\,h=a_{1}b\chi-(a_{1}-a_{2})h_{1},$$
where $b\in\mathbb{C}\setminus\{0\}$ and
$a_{1},\,a_{2},\,a_{3}\in\mathbb{C}$ are constants.
\end{thm}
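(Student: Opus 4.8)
The plan is to read (\ref{eq51}) as the Pexiderization of (\ref{eq52}) in its first slot (the free unknown $g_1$ playing the role of $f$) and to push every case back either to the already-solved equation (\ref{eq52}) or to the linear algebra of the three distinct characters $\mu_1,\mu_2,\chi$. The converse direction---that each of the tuples (1)--(5) does solve (\ref{eq51})---is a direct substitution using $g=\frac{\mu_1+\mu_2}{2}$ together with the defining identity of $\varphi_{\mu,\chi}$, so I would dispose of it in a single sentence and put all the weight on the forward direction.

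So let $(f,g_1,h_1,h_2,h)$ be a solution. First I would clear the degenerate case $h_1=0$: Lemma \ref{lem53}(1) gives at once $f=h=b\chi$, $h_2=0$ and $g_1$ arbitrary, i.e.\ part (1). From then on $h_1\neq0$. Putting $y=e$ in (\ref{eq51}) produces the key relation $f=h_1(e)g_1+h_2(e)g+h(e)\chi$, which is what lets me control the otherwise free function $g_1$. The natural branch is now on whether $\{f,\mu_1,\mu_2,\chi\}$ is linearly independent.

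If $\{f,\mu_1,\mu_2,\chi\}$ is linearly independent, the relation above forces $h_1(e)\neq0$ (otherwise $f=h_2(e)g+h(e)\chi$ would be a combination of $\mu_1,\mu_2,\chi$). Hence I can solve $g_1=\frac{1}{h_1(e)}\bigl(f-h_2(e)g-h(e)\chi\bigr)$ and substitute back; a short computation shows the quadruple $(f,\hat h_1,\hat h_2,\hat h)$ with $\hat h_1=h_1/h_1(e)$, $\hat h_2=h_2-\frac{h_2(e)}{h_1(e)}h_1$ and $\hat h=h-\frac{h(e)}{h_1(e)}h_1$ satisfies exactly (\ref{eq52}). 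Since $\{f,\mu_1,\mu_2,\chi\}$ is independent, Proposition \ref{prop56} (equivalently case (1) of Theorem \ref{thm56}) applies and pins down $\hat h_1=\mu$, $\hat h_2=0$, $\hat h=\varphi_{\mu,\chi}$ with $f=c\mu+\varphi_{\mu,\chi}$ for some character $\mu$; undoing the normalization, writing $b:=h_1(e)$ and rescaling $\varphi_{\mu,\chi}$, reassembles part (4).

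If instead $\{f,\mu_1,\mu_2,\chi\}$ is linearly dependent, Lemma \ref{lem53}(2) makes $\{g_1,\mu_1,\mu_2,\chi\}$ dependent too, so---because $\{\mu_1,\mu_2,\chi\}$ is independent---I may write $f=a_1\mu_1+a_2\mu_2+a_3\chi$ and $g_1=d_1\mu_1+d_2\mu_2+d_3\chi$. Inserting both into (\ref{eq51}) and separating the coefficients of $\mu_1(x),\mu_2(x),\chi(x)$ via Proposition \ref{prop51} yields the scalar system $a_1\mu_1=d_1h_1+\tfrac12 h_2$, $a_2\mu_2=d_2h_1+\tfrac12 h_2$, $a_3\chi=d_3h_1+h$; subtracting the first two gives $(d_1-d_2)h_1=a_1\mu_1-a_2\mu_2$, and the decisive split is on this factor. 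When $d_1\neq d_2$ the identity determines $h_1=\frac{1}{d_1-d_2}(a_1\mu_1-a_2\mu_2)$ and back-substitution fixes $h_2,h$, giving part (3); when $d_1=d_2$ the inequality $\mu_1\neq\mu_2$ forces $a_1=a_2=0$, so $f$ collapses to a multiple of $\chi$ while $h_1$ remains free, producing the families with arbitrary $h_1$ in parts (2) and (5). I expect the real obstacle to live entirely in this dependent case: the bookkeeping needed to match the three scalar identities to the precise normalizations of (2), (3) and (5)---tracking which linear combinations of the $a_i$ and $d_j$ appear and verifying that no hidden consistency constraint is forced by the system---is the delicate part, whereas the independent case is a clean reduction to Theorem \ref{thm56}.
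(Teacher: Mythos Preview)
Your proposal is correct and uses the same ingredients as the paper: the evaluation at $y=e$ to relate $f$ and $g_1$, Lemma~\ref{lem53}, the reduction to (\ref{eq52}) via Theorem~\ref{thm56}/Proposition~\ref{prop56}, and the scalar system coming from Proposition~\ref{prop51}. The only real difference is the top-level case split: the paper branches first on $h_1(e)=0$ versus $h_1(e)\neq0$ and then, in the latter case, feeds \emph{all three} cases of Theorem~\ref{thm56} back (thereby recovering part~(3) twice, once from each branch), whereas you branch on the linear (in)dependence of $\{f,\mu_1,\mu_2,\chi\}$ and invoke only Proposition~\ref{prop56} in the independent case, handling the whole dependent case by a single direct scalar-system computation. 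Your organization is a bit more economical---it avoids the duplicate derivation of~(3)---but the substance is identical; the ``bookkeeping'' you flag as the obstacle in the dependent branch is exactly the computation the paper carries out (twice), and it goes through without any hidden constraint beyond the obvious split $d_1=d_2$ versus $d_1\neq d_2$.
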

\begin{proof} We check by elementary computations that if $f,g_{1},h_{1},h_{2}$ and $h$
are of the forms (1)-(5) then $(f,g_{1},h_{1},h_{2},h)$ is a
solution of (\ref{eq51}),
so left is that any solution $(f,g_{1},h_{1},h_{2},h)$ of (\ref{eq51}) fits into (1)-(5).\\
By putting $y=e$ in (\ref{eq51}) we get that
\begin{equation}\label{eq513}f=h_{1}(e)g_{1}+\dfrac{h_{2}(e)}{2}\mu_{1}+\dfrac{h_{2}(e)}{2}\mu_{2}+h(e)\chi\end{equation}
We split the discussion into the subcases $h_{1}(e)=0$ and
$h_{1}(e)\neq0$.
\par Case 1: Suppose $h_{1}(e)=0$. Then (\ref{eq513}) gives
\begin{equation}\label{eq514}f=a\mu_{1}+a\mu_{2}+b\chi\end{equation}
where $a:=\dfrac{h_{2}(e)}{2}$ and $b:=h(e)$. Hence, the set
$\{f,\mu_{1},\mu_{2},\chi\}$ is linearly dependent.
\par If $h_{1}=0$ then, according to Lemma \ref{lem53}(1), we get that
$g_{1}$ is arbitrary, $h_{2}=0$ and $f=h=b\chi$, which is solution
(1).
\par If $h_{1}\neq0$. So, by Lemma \ref{lem53}(2),
the set $\{g_{1},\mu_{1},\mu_{2},\chi\}$ is linearly dependent.
Since the set $\{\mu_{1},\mu_{2},\chi\}$ is linearly independent
there exists a triple $(\alpha,\beta,\gamma)\in\mathbb{C}^{3}$ such
that
\begin{equation}\label{eq515}g_{1}=\alpha\mu_{1}+\beta\mu_{2}+\gamma\chi.\end{equation}
By substituting (\ref{eq514}) and (\ref{eq515}) in (\ref{eq51}) we
derive by simple computations that
\begin{equation*}\begin{split}&a\mu_{1}(y)\mu_{1}(x)+a\mu_{2}(y)\mu_{2}(x)+b\chi(y)\chi(x)\\
&=[\alpha h_{1}(y)+\dfrac{1}{2}h_{2}(y)]\mu_{1}(x)+[\beta
h_{1}(y)+\dfrac{1}{2}h_{2}(y)]\mu_{2}(x)+[\gamma
h_{1}(y)+h(y)]\chi(x),\end{split}\end{equation*} for all $x,y\in G$.
So, the set $\{\mu_{1},\mu_{2},\chi\}$ being linearly independent,
we derive from the identity above that $(h_{1},h_{2},h)$ satisfies
the following identities as a system
\begin{equation}\label{eq515-1}\alpha
h_{1}+\dfrac{1}{2}h_{2}=a\mu_{1},\end{equation}
\begin{equation}\label{eq515-2}\beta
h_{1}+\dfrac{1}{2}h_{2}=a\mu_{2}\end{equation} and
\begin{equation}\label{eq515-3}\gamma h_{1}+h=b\chi.\end{equation}
By subtracting (\ref{eq515-2}) from (\ref{eq515-1}) we get that
\begin{equation}\label{eq515-4}(\alpha-\beta)h_{1}=a(\mu_{1}-\mu_{2}).\end{equation}
\par If $\alpha=\beta$ then (\ref{eq515-4}) implies that $a=0$ because
$\mu_{1}\neq\mu_{2}$. Hence, from (\ref{eq514}), (\ref{eq515}),
(\ref{eq515-1}) and (\ref{eq515-4}) we get that $$f=b\chi,\,
g_{1}=\alpha\mu_{1}+\alpha\mu_{2}+\gamma\chi,\,
h_{1}\neq0\,\,\text{is arbitrary}\,\,, h_{2}=-2\alpha
h_{1}\,\,\text{and}\,\,h=b\chi-\gamma h_{1}.$$ This is solution (2).
\par If $\alpha\neq\beta$ then we obtain, from (\ref{eq515-4}), that
\begin{equation}\label{eq516}h_{1}=\dfrac{a}{\alpha-\beta}(\mu_{1}-\mu_{2}).\end{equation}
By substituting (\ref{eq516}) in (\ref{eq515-1}) and in
(\ref{eq515-3}) we get that
\begin{equation}\label{eq517}h_{2}=-\dfrac{2\beta a}{\alpha-\beta}\mu_{1}+\dfrac{2\alpha a}{\alpha-\beta}\mu_{2}.\end{equation}
and
\begin{equation}\label{eq518}h=-\dfrac{\gamma a}{\alpha-\beta}\mu_{1}+\dfrac{\gamma a}{\alpha-\beta}\mu_{2}+b\chi.\end{equation}
Let
$$d_{1}:=2\alpha,\,d_{2}:=2\beta,\,d_{3}:=\gamma$$
$$a_{1}:=\dfrac{2a}{d_{1}-d_{2}},\, a_{2}:=-\dfrac{2a}{d_{1}-d_{2}}\,\,\text{and}\,\,a_{3}:=-\dfrac{2b}{d_{1}-d_{2}}.$$
Notice that $d_{1}\neq d_{2}$ because $\alpha-\beta\neq0$. By using
(\ref{eq514}), (\ref{eq515}), (\ref{eq516}), (\ref{eq517}) and
(\ref{eq518}) we obtain by small computations
$$f=(d_{1}-d_{2})(\dfrac{1}{2}a_{1}\mu_{1}-\dfrac{1}{2}a_{2}\mu_{2}-\dfrac{1}{2}a_{3}\chi),\,
g_{1}=\dfrac{1}{2}d_{1}\mu_{1}+\dfrac{1}{2}d_{2}\mu_{2}+d_{3}\chi,$$
$$h_{1}=a_{1}\mu_{1}+a_{2}\mu_{2},$$$$h_{2}=-a_{1}d_{2}\mu_{1}-a_{2}d_{1}\mu_{2}$$
and
$$h=-a_{1}d_{3}\mu_{1}-a_{2}d_{3}\mu_{2}-\dfrac{1}{2}a_{3}(d_{1}-d_{2})\chi,$$
which is solution (3).
\par Case 2: Suppose $h_{1}(e)\neq0$. Then from (\ref{eq513})
we get that
\begin{equation}\label{eq519}g_{1}=\dfrac{1}{h_{1}(e)}f-\dfrac{h_{2}(e)}{h_{1}(e)}g-\dfrac{h(e)}{h_{1}(e)}\chi.\end{equation}
Substituting this back into (\ref{eq51}) we obtain
$$f(xy)=[\dfrac{1}{h_{1}(e)}f(x)-\dfrac{h_{2}(e)}{h_{1}(e)}g(x)-\dfrac{h(e)}{h_{1}(e)}\chi(x)]h_{1}(y)+g(x)h_{2}(y)+\chi(x)h(y)$$
for all $x,y\in G$. So that
$$f(xy)=f(x)(\dfrac{1}{h_{1}(e)}h_{1})(y)+(h_{2}-\dfrac{h_{2}(e)}{h_{1}(e)}h_{1})(y)g(x)+\chi(x)(h-\dfrac{h(e)}{h_{1}(e)}h_{1})(y)$$
for all $x,y\in G$, i.e., the quadruple
$(f,h_{1}/h_{1}(e),h_{2}-\dfrac{h_{2}(e)}{h_{1}(e)}h_{1},h-\dfrac{h(e)}{h_{1}(e)}h_{1})$
is a solution of the functional equation (\ref{eq52}), so, according
to
Theorem \ref{thm56}, that quadruple falls into three categories:\\
(i) $f=c\mu+\varphi_{\mu,\chi},\, h_{1}/h_{1}(e)=\mu,\,
h_{2}-\dfrac{h_{2}(e)}{h_{1}(e)}h_{1}=0,\,
h-\dfrac{h(e)}{h_{1}(e)}h_{1}=\varphi_{\mu,\chi}$,\\
where $\mu$ is a character of $G$ and $c\in\mathbb{C}$ is a
constant. We define complex constants by
$b:=h_{1}(e)\in\mathbb{C}\setminus\{0\}$,
$a_{1}:=\dfrac{c}{h_{1}(e)}$, $a_{2}:=\dfrac{h_{2}(e)}{h_{1}(e)}$
and $a_{3}:=\dfrac{h(e)}{h_{1}(e)}$. So, the identities above and
(\ref{eq519}) yield, by writing $\varphi_{\mu,\chi}$ instead of
$\dfrac{1}{b}\varphi_{\mu,\chi}$, that
$$f=a_{1}b\mu+b\varphi_{\mu,\chi},\,\,
g_{1}=-\dfrac{1}{2}a_{2}\mu_{1}-\dfrac{1}{2}a_{2}\mu_{2}-a_{3}\chi+a_{1}\mu+\varphi_{\mu,\chi},$$
$$h_{1}=b\mu,\,\,h_{2}=a_{2}b\mu\,\,\text{and}\,\,
h=a_{3}b\mu+b\varphi_{\mu,\chi},$$ which is solution
(4).\\
(ii)
$$f=c\chi,\,h_{1}/h_{1}(e)\,\,\text{arbitrary},\,h_{2}-\dfrac{h_{2}(e)}{h_{1}(e)}h_{1}=0
\,\,\text{and}\,\,h-\dfrac{h(e)}{h_{1}(e)}h_{1}=c(\chi-\dfrac{1}{h_{1}(e)}h_{1}),$$
where $c\in\mathbb{C}$ is a constant.\\
Defining complex constants $b:=h_{1}(e)\in\mathbb{C}\setminus\{0\}$,
$a_{1}:=\dfrac{c}{h_{1}(e)}\in\mathbb{C}$,
$a_{2}:=\dfrac{h_{2}(e)}{h_{1}(e)}$ and
$a_{3}:=\dfrac{h(e)}{h_{1}(e)}$ we obtain, by a small computation,
from the identities above and (\ref{eq519}) that
$$f=a_{1}b\chi,\,\,g_{1}=-\dfrac{1}{2}a_{1}\mu_{1}-\dfrac{1}{2}a_{2}\mu_{2}+(a_{1}-a_{2})\chi,\,\,h_{1}\,\,\text{is arbitrary},$$
$$h_{2}=a_{2}h_{1}\,\,\text{and}\,\,h=a_{1}b\chi-(a_{1}-a_{2})h_{1},$$ which is solution (5).\\
(iii)
$$f=a\mu_{1}+b\mu_{2}+c\chi,\,\,h_{1}/h_{1}(e)=\dfrac{1}{a-b}(a\mu_{1}-b\mu_{2}),$$
$$h_{2}-\dfrac{h_{2}(e)}{h_{1}(e)}h_{1}=-\dfrac{2ab}{a-b}(\mu_{1}-\mu_{2}),\,\,h-\dfrac{h(e)}{h_{1}(e)}h_{1}=\dfrac{c}{a-b}(-a\mu_{1}+b\mu_{2}+(a-b)\chi),$$
where $a,b,c\in\mathbb{C}$ such that $a\neq b$. Using (\ref{eq519})
and the identities above we get that
\begin{equation}\label{eq520}\begin{split}&g_{1}=\dfrac{2a-h_{2}(e)}{2h_{1}(e)}\mu_{1}+\dfrac{2b-h_{2}(e)}{2h_{1}(e)}\mu_{2}+\dfrac{c-h(e)}{h_{1}(e)}\chi,\\
&h_{1}=\dfrac{ah_{1}(e)}{a-b}\mu_{1}-\dfrac{bh_{1}(e)}{a-b}\mu_{2},\\
&h_{2}=-\dfrac{a(2b-h_{2}(e))}{a-b}\mu_{1}+\dfrac{b(2a-h_{2}(e))}{a-b}\mu_{2},\\
&h=-\dfrac{a(c-h(e))}{a-b}\mu_{1}+\dfrac{b(c-h(e))}{a-b}\mu_{2}+c\chi.\end{split}\end{equation}
Defining
$$a_{1}:=\dfrac{ah_{1}(e)}{a-b},\,a_{2}:=-\dfrac{bh_{1}(e)}{a-b},\,a_{3}:=-\dfrac{ch_{1}(e)}{a-b}$$
$$d_{1}:=\dfrac{2a-h_{2}(e)}{h_{1}(e)},\,d_{2}:=\dfrac{2b-h_{2}(e)}{h_{1}(e)}\,\,\text{and}\,\,d_{3}:=\dfrac{c-h(e)}{h_{1}(e)},$$
we have $a=\frac{1}{2}a_{1}(d_{1}-d_{2})$,
$b=-\frac{1}{2}(d_{1}-d_{2})$ and
$c=-\frac{1}{2}a_{3}(d_{1}-d_{3})$. Sine $a\neq b$ we have
$d_{1}\neq d_{2}$. Moreover the identities
$f=a\mu_{1}+b\mu_{2}+c\chi$ and (\ref{eq520}) read
$$f=(d_{1}-d_{2})(\dfrac{1}{2}a_{1}\mu_{1}-\dfrac{1}{2}a_{2}\mu_{2}-\dfrac{1}{2}a_{3}\chi),\,
g_{1}=\dfrac{1}{2}d_{1}\mu_{1}+\dfrac{1}{2}d_{2}\mu_{2}+d_{3}\chi,$$
$$h_{1}=a_{1}\mu_{1}+a_{2}\mu_{2},\,h_{2}=-a_{1}d_{2}\mu_{1}-a_{2}d_{1}\mu_{2}$$
and
$$h=-a_{1}d_{3}\mu_{1}-a_{2}d_{3}\mu_{2}-\dfrac{1}{2}a_{3}(d_{1}-d_{2})\chi,$$
which is solution (3). This completes the proof of Theorem
\ref{Thm57}.
\end{proof}
\par From Case 1 of the proof of Theorem
\ref{Thm57} we derive that if $f,g_{1},h_{1},h_{2},h:G\to\mathbb{C}$
satisfy the functional equation (\ref{eq51}) and $h_{1}(e)=0$, then
$f$ and $g_{1}$ are abelian, or $f,\,g_{1},\,h_{1},\,h_{2}$ and $h$
are abelian.
\par According to \cite[Proposition 5]{Stetkaer2}, if $\mu\neq\chi$ and $G$ is an abelian group then the function $\varphi_{\mu,\chi}$, in the
formulas of solutions of the form (4) in Theorem \ref{Thm57}, take
the form $\varphi_{\mu,\chi}=c(\mu-\chi)$, where $c$ ranges over
$\mathbb{C}$. On groups that need not be abelian, the expression of
$\varphi_{\mu,\chi}$ is given by Theorem \ref{Thm53}.
\begin{exple} If we choice $b=\gamma=0$ in Theorem \ref{Thm57}(2), or $a_{3}=d_{3}=0$ in Theorem
\ref{Thm57}(3), or $a_{3}=0$ and $\varphi_{\mu,\chi}=0$ in Theorem
\ref{Thm57}(4), we get that $h=0$, then the functional equation
(\ref{eq51}) reduces to
$$f(xy)=g_{1}h_{1}(y)+g(x)h_{2}(y),\,x,y\in G$$ which was solved in \cite[Theorem 8]{Ebanks and
Stetkaer}.\\
(1) for $b=\gamma=0$ in Theorem \ref{Thm57}(2), we get, by putting
$c=2\alpha$, that
$$f=0,\,g_{1}=\alpha\mu_{1}+\alpha\mu_{2}=cg,\,h_{1}\neq0\,\,\text{is arbitrary},\,\,h_{2}=-ah_{1}\,\,\text{and}\,\,h=-bh_{1},$$
which is the solution
obtained in \cite[Theorem 8(d)]{Ebanks and Stetkaer}.\\
(2) for $a_{3}=d_{3}=0$ in Theorem \ref{Thm57}(3) we obtain
$$f=(d_{1}-d_{2})(\dfrac{1}{2}a_{1}\mu-\dfrac{1}{2}a_{2}\chi),\,
g_{1}=\dfrac{1}{2}d_{1}\mu+\dfrac{1}{2}d_{2}\chi,$$ and
$$h_{1}=a_{1}\mu+a_{2}\chi,\,h_{2}=-a_{1}d_{2}\mu-a_{2}d_{1}\chi,$$
where $a_{1},\,a_{2},\,d_{1},\,d_{2}\in\mathbb{C}$ are constants.
Furthermore for $f\neq0$ we have $d_{1}\neq d_{2}$ and
$(a_{1},a_{2})\neq(0,0)$. The solution was obtained in
\cite[Proposition 6(b), Theorem 8(b)]{Ebanks and
Stetkaer}.\\
(3) for $a_{3}=0$ and $\varphi_{\mu,\chi}=0$ in Theorem
\ref{Thm57}(4),
$$f=a_{1}b\mu,\,g_{1}=a_{1}\mu-a_{2}g,\,h_{1}=b\mu\,\,\text{and}\,\,h_{2}=a_{2}b\mu,$$
where $a_{1},\,a_{2},\,\in\mathbb{C}$ are constants. Furthermore for
$f\neq0$ we have $a_{1},\,b\in\mathbb{C}\setminus\{0\}$. The
solution was obtained in \cite[Theorem 8(a)]{Ebanks and Stetkaer}.
\end{exple}
\begin{exple} By taking $d_{1}=-d_{2}$, $a_{1}=a_{2}=\dfrac{1}{2}$
and $a_{3}=d_{3}=0$ in Theorem \ref{Thm57}(3) we get that
$$f=g_{1}=h_{2}=d_{1}\dfrac{\mu_{1}-\mu_{2}}{2}\quad\text{and}\quad
h_{1}=\dfrac{\mu_{1}+\mu_{2}}{2},\quad\text{where}\quad
d_{1}\in\mathbb{C}\setminus\{0\},$$ which is the solution of the
classic sine addition law obtained in \cite[Theorem
4.1(c)]{Stetkaer1}.
\end{exple}
\section{Solutions of Eq. (\ref{Eq5-2}) and Eq. (\ref{Eq5-1})}
In this section $\chi_{1},\cdot\cdot\cdot,\chi_{N}$ are $N$ distinct
characters of a group $G$.
\subsection{Solutions of Eq. (\ref{Eq5-2})}\quad\\\\ In Theorem \ref{thm5-1} we
solve the functional equation (\ref{Eq5-2}), i.e.,
\begin{equation*}f(xy)=f(x)h(y)+\sum_{j=1}^{N}\chi_j(x)h_j(y),\,x,y\in G,\end{equation*}
where $f,\,h,\,h_{1},\cdot\cdot\cdot,\,h_{N}:G\to\mathbb{C}$ are the
unknown functions to be determined.
\begin{thm}\label{thm5-1} The solutions $(f,\,h,\,h_{1},\cdot\cdot\cdot,\,h_{N})$ of
(\ref{Eq5-2}) are:\\
(1)
$$f=\sum_{j=1}^{N}\alpha_{j}\chi_j,\,h\,\,\text{arbitrary}\,\,\,\,\text{and}\,\,h_{j}=\alpha_{j}\chi_{j}-\alpha_{j}h$$ for all
$j=1,\cdot\cdot\cdot,N$,
where $\alpha_{1},\cdot\cdot\cdot,\alpha_{N}\in\mathbb{C}$ are constants ;\\
(2)
$$f=a\chi+\sum_{j=1}^{N}\varphi_{\chi,\chi_{j}},\,h=\chi\,\,\text{and}\,\,h_{j}=\varphi_{\chi,\chi_{j}}$$ for all $j=1,\cdot\cdot\cdot,N$,
where $\chi$ is a character of $G$ and $a\in\mathbb{C}$ is a
constant.
\end{thm}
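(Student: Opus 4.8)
The plan is to follow the associativity strategy already used in the proof of Theorem \ref{thm56}. First I would verify by direct substitution that the two families (1) and (2) really do solve (\ref{Eq5-2}); this is routine and I would only indicate it. For the converse the engine is to expand $f(xyz)$ in two ways, as $f(x(yz))$ and as $f((xy)z)$, using (\ref{Eq5-2}) together with the multiplicativity $\chi_j(xy)=\chi_j(x)\chi_j(y)$. Comparing the two expressions and collecting the functions of $x$, namely $f$ together with $\chi_1,\dots,\chi_N$, yields for all $x,y,z\in G$ the identity
$$f(x)[h(yz)-h(y)h(z)]+\sum_{j=1}^{N}\chi_j(x)[h_j(yz)-h_j(y)h(z)-\chi_j(y)h_j(z)]=0.$$
The argument then splits according to whether the set $\{f,\chi_1,\dots,\chi_N\}$ is linearly independent or not.

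In the linearly dependent case I expect to recover family (1). Since $\{\chi_1,\dots,\chi_N\}$ is linearly independent by Proposition \ref{prop51}, dependence forces $f=\sum_{j=1}^{N}\alpha_j\chi_j$ for some constants $\alpha_j\in\mathbb{C}$. Substituting this into (\ref{Eq5-2}), using $f(xy)=\sum_{j}\alpha_j\chi_j(x)\chi_j(y)$, and equating the coefficients of the independent functions $\chi_j(x)$ (again by Proposition \ref{prop51}) gives $\alpha_j\chi_j=\alpha_j h+h_j$, that is $h_j=\alpha_j(\chi_j-h)$ with $h$ left completely unconstrained. This is exactly solution (1).

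In the linearly independent case I expect family (2). There the displayed identity forces every bracket to vanish for all $y,z$, so $h$ is multiplicative and each $h_j$ satisfies $h_j(xy)=h_j(x)h(y)+\chi_j(x)h_j(y)$. If $h=0$, then this reduces to $h_j=h_j(e)\chi_j$ and (\ref{Eq5-2}) collapses, upon setting $y=e$, to $f=\sum_{j}h_j(e)\chi_j$, contradicting the independence of $\{f,\chi_1,\dots,\chi_N\}$. Hence $h\neq0$; a nonzero multiplicative function on a group is a character (it takes the value $1$ at $e$ and never vanishes), say $h=\chi$. Then each $h_j$ solves equation (\ref{Eq}) for the pair of characters $(\chi,\chi_j)$, so $h_j=\varphi_{\chi,\chi_j}$ in the notation fixed before Theorem \ref{Thm53}. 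Finally, putting $x=e$ in (\ref{Eq5-2}) and writing $a:=f(e)$ gives $f=a\chi+\sum_{j=1}^{N}\varphi_{\chi,\chi_j}$, which is solution (2).

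The computation is elementary throughout; the two points needing care are the bookkeeping of coefficients in the associativity identity, where the sum now carries $N$ terms instead of the three appearing in Theorem \ref{thm56}, and the exclusion of the degenerate case $h=0$. That exclusion is what guarantees $h$ is a genuine character in family (2) and what keeps the two families cleanly separated, so I regard it as the main (if modest) obstacle.
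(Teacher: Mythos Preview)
Your proposal is correct and follows essentially the same approach as the paper: the same case split on the linear dependence of $\{f,\chi_1,\dots,\chi_N\}$, the same associativity computation in the independent case yielding multiplicativity of $h$ and the functional equation (\ref{Eq}) for each $h_j$, and the same exclusion of $h=0$ via the contradiction $f\in\operatorname{span}\{\chi_1,\dots,\chi_N\}$. The only cosmetic difference is that you write the associativity identity once and then branch, whereas the paper branches first and invokes associativity only in Case~2.
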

\begin{proof} Elementary computations show that if $f,h,h_{1},\cdot\cdot\cdot,h_{N}$
are of the forms (1)-(2) then $(f,h,h_{1},\cdot\cdot\cdot,h_{N})$ is
a solution of (\ref{Eq5-2}), so left is that any solution
$(f,h,h_{1},\cdot\cdot\cdot,h_{N})$ of (\ref{Eq5-2}) fits into
(1)-(2). We consider two cases.
\par Case 1: Suppose that the set $\{f,\chi_{1},\cdot\cdot\cdot,\chi_{N}\}$ is linearly
dependent. Since $\chi_{1},\cdot\cdot\cdot,\chi_{N}$ are different
characters then, according to Proposition \ref{prop51}, the set
$\{\chi_{1},\cdot\cdot\cdot,\chi_{N}\}$ is linearly independent. So
there exist $\alpha_{1},\cdot\cdot\cdot,\alpha_{N}\in\mathbb{C}$
such that
\begin{equation*}f=\sum_{j=1}^{N}\alpha_{j}\chi_{j}.\end{equation*} Substituting this in
(\ref{Eq5-2}) we get that
\begin{equation*}\sum_{j=1}^{N}\alpha_{j}\chi_{j}(y)\chi_{j}(x)=\sum_{j=1}^{N}(\alpha_{j}h(y)+h_{j}(y))\chi_{j}(x),\end{equation*}
for all $x,y\in G$. So, by using the linear independence of
$\{\chi_{1},\cdot\cdot\cdot,\chi_{N}\}$, we obtain
\begin{equation*}\alpha_{j}h(y)+h_{j}(y)=\alpha_{j}\chi_{j}(y),\end{equation*}
for all $y\in G$ and all $j=1,\cdot\cdot\cdot,N$. Hence
\begin{equation*}h_{j}=\alpha_{j}\chi_{j}-\alpha_{j}h,\end{equation*}
for all $j=1,\cdot\cdot\cdot,N$, with $h$ arbitrary. The result
occurs in part (1).
\par Case 2: Suppose that the set $\{f,\chi_{1},\cdot\cdot\cdot,\chi_{N}\}$ is linearly
independent. Let $x,y,z\in G$ be arbitrary. We compute $f(zxy)$ by
using the associativity of the operation of $G$. We have
\begin{equation}\label{Eq5-3}f(z(xy))=f(z)h(xy)+\sum_{j=1}^{N}\chi_{j}(z)h_{j}(xy).\end{equation}
On the other hand, by using (\ref{Eq5-2}), and that
$\chi_{1},\cdot\cdot\cdot,\chi_{N}$ are multiplicative, we obtain
\begin{equation}\label{Eq5-4}\begin{split}&f((zx)y)=f(zx)h(y)+\sum_{j=1}^{N}\chi_{j}(zx)h_{j}(y)\\
&=[f(z)h(x)+\sum_{j=1}^{N}\chi_{j}(z)h_{j}(x)]h(y)+\sum_{j=1}^{N}\chi_{j}(zx)h_{j}(y)\\
&=f(z)h(x)h(y)+\sum_{j=1}^{N}\chi_{j}(z)[h_{j}(x)h(y)+\chi_{j}(x)h_{j}(y)].\end{split}\end{equation}
Since the set $\{f,\chi_{1},\cdot\cdot\cdot,\chi_{N}\}$ is linearly
independent and $x,y,z$ are arbitrary we derive from (\ref{Eq5-3})
and (\ref{Eq5-4}) that
\begin{equation}\label{Eq5-5}h(xy)=h(x)h(y)\end{equation}
and
\begin{equation}\label{Eq5-6}h_{j}(xy)=h_{j}(x)h(y)+\chi_{j}(x)h_{j}(y),\end{equation}
for all $x,y\in G$ and all $j=1,\cdot\cdot\cdot,N$. The functional
equation (\ref{Eq5-5}) says that $\chi:=h$ is a multiplicative
function of $G$. If there exists $x_{0}\in G$ such that
$\chi(x_{0})=0$ then $h=\chi=0$. So, by putting $y=e$ in
(\ref{Eq5-2}), we get that $f=\sum_{j=1}^{N}h_{j}(e)\chi_{}$, which
contradicts that the set $\{f,\chi_{1},\cdot\cdot\cdot,\chi_{N}\}$
is linearly independent. Thus $\chi$ is a character of $G$ and the
functional equations (\ref{Eq5-6}) becomes
\begin{equation}\label{Eq5-6}h_{j}(xy)=h_{j}(x)\chi(y)+\chi_{j}(x)h_{j}(y),\end{equation}
for all $x,y\in G$ and all $j=1,\cdot\cdot\cdot,N$, i.e.,
$h_{j}=\varphi_{\chi,\chi_{j}}$ for all $j=1,\cdot\cdot\cdot,N$.
\par To find $f$ we put $x=e$ in (\ref{Eq5-2}) which yields that
$$f=a\chi+\sum_{j=1}^{N}\varphi_{\chi,\chi_{j}}$$ where $a:=f(e)$. The result occurs in part
(2). This completes the proof of Theorem \ref{thm5-1}.
\end{proof}
\subsection{Solutions of Eq. (\ref{Eq5-1})}\quad\\\\
In Theorem \ref{thm5-2} we solve the functional equation
(\ref{Eq5-1}), i.e.,
\begin{equation*}f(xy)=g(x)h(y)+\sum_{j=1}^{N}\chi_j(x)h_j(y),\,x,y\in G,\end{equation*}
where $f,\,g,\,h,\,h_{1},\cdot\cdot\cdot,\,h_{N}:G\to\mathbb{C}$ are
the unknown functions to be determined.
\begin{thm}\label{thm5-2} The solutions $(f,\,g,\,h,\,h_{1},\cdot\cdot\cdot,\,h_{N})$ of
(\ref{Eq5-1}) are:\\
(1)
$$f=\sum_{j=1}^{N}a_{j}\chi_j,\,g\,\,\text{arbitrary}\,\,,h=0\,\,\text{and}\,\,h_{j}=a_{j}\chi_{j}$$
for all $j=1,\cdot\cdot\cdot,N$, where $a_{1},\cdot\cdot\cdot,a_{N}\in\mathbb{C}$ are constants;\\
(2)
$$f=\sum_{j=1}^{N}a_{j}\chi_j,\,g=\sum_{j=1}^{N}\beta_{j}\chi_j,\,h\neq0\,\,\text{arbitrary}\,\,\text{and}\,\,h_{j}=a_{j}\chi_{j}-\beta_{j}h,$$ for all
$j=1,\cdot\cdot\cdot,N$,
where $a_{1},\cdot\cdot\cdot,a_{N},\beta_{1},\cdot\cdot\cdot,\beta_{N}\in\mathbb{C}$ are constants;\\
(3) $$f=\alpha
b\chi+\sum_{j=1}^{N}b\varphi_{\chi,\chi_{j}},\,g=\alpha\chi-\sum_{j=1}^{N}a_{j}\chi_{j}+\sum_{j=1}^{N}\varphi_{\chi,\chi_{j}},\,h=b\chi\,\,\text{and}\,\,h_{j}=a_{j}b\chi+b\varphi_{\chi,\chi_{j}},$$
for all $j=1,\cdot\cdot\cdot,N$, where $\chi$ is a character of $G$,
and $\alpha,a_{1},\cdot\cdot\cdot,a_{N}\in\mathbb{C}$ and
$b\in\mathbb{C}\setminus\{0\}$ are constants.
\end{thm}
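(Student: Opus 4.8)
The plan is to mirror the reduction already used for Theorem~\ref{Thm57}: solve the Pexiderized equation (\ref{Eq5-1}) by peeling off the separate factor $g$ and relating what remains to the already-solved equation (\ref{Eq5-2}). As usual the converse implication---that the tuples listed in (1)--(3) satisfy (\ref{Eq5-1})---is a direct substitution, so the whole effort goes into showing that every solution has one of these forms. The first move is to put $y=e$ in (\ref{Eq5-1}); since $\chi_j(e)=1$ this yields $f=h(e)\,g+\sum_{j=1}^{N}h_j(e)\chi_j$. The vanishing or not of $h(e)$ drives the entire argument, so I would split into the cases $h(e)=0$ and $h(e)\neq0$.

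In the case $h(e)=0$ the identity above forces $f=\sum_{j=1}^{N}a_j\chi_j$ with $a_j:=h_j(e)$, i.e. $f$ is a linear combination of the given characters. Substituting this back into (\ref{Eq5-1}) and using multiplicativity of the $\chi_j$ gives $g(x)h(y)=\sum_{j=1}^{N}\chi_j(x)\bigl[a_j\chi_j(y)-h_j(y)\bigr]$ for all $x,y\in G$. Here I would sub-split on whether $h$ vanishes identically. If $h=0$ the right-hand side is $0$, and Proposition~\ref{prop51} (linear independence of distinct characters) forces $h_j=a_j\chi_j$ with $g$ left free---this is family (1). If $h\neq0$ I would pick $y_0$ with $h(y_0)\neq0$, read off $g=\sum_{j=1}^{N}\beta_j\chi_j$ as a character combination, feed this back in, and invoke Proposition~\ref{prop51} once more to obtain $h_j=a_j\chi_j-\beta_j h$---this is family (2).

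In the case $h(e)\neq0$, the $y=e$ identity lets me solve for $g$, namely $g=\tfrac{1}{h(e)}f-\sum_{j}\tfrac{h_j(e)}{h(e)}\chi_j$; substituting this into (\ref{Eq5-1}) and regrouping shows that $f$, together with $h/h(e)$ in the role of the multiplier and $h_j-\tfrac{h_j(e)}{h(e)}h$ in the role of the $j$-th summand, is a solution of the reduced equation (\ref{Eq5-2}). Theorem~\ref{thm5-1} then splits this into its two families. Its first family (where $f$ is again a character combination and the modified multiplier is arbitrary) should, after unwinding the substitution for $g$ and the $h_j$, collapse into family (2) above; its second family (where $h/h(e)$ is a character $\chi$ and the modified $h_j$ equal $\varphi_{\chi,\chi_j}$) should produce family (3).

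The step I expect to demand the most care is the bookkeeping in this last subcase. I must absorb the nonzero constant $b:=h(e)$ into the $\varphi_{\chi,\chi_j}$ (replacing $\varphi_{\chi,\chi_j}$ by $b\,\varphi_{\chi,\chi_j}$, exactly as in the proof of Theorem~\ref{Thm57}), set $\alpha:=f(e)/b$ and $a_j:=h_j(e)/b$ (using $\varphi_{\chi,\chi_j}(e)=0$), and verify that the resulting expressions for $f$, $g$, $h$ and the $h_j$ match (3) verbatim. The conceptual point to confirm---and the reason no spurious fourth family appears---is precisely that the first family coming out of Theorem~\ref{thm5-1} yields nothing new but folds back into (2); together with the observation that $h(e)\neq0$ forces $h\neq0$, this also shows why the $h=0$ possibility occurs only in family (1).
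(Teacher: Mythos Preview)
Your plan is correct and very close to the paper's own proof; the only real difference is the initial case split. The paper splits on whether the set $\{f,\chi_1,\ldots,\chi_N\}$ is linearly dependent or independent, whereas you split on whether $h(e)=0$ or $h(e)\neq 0$. The paper's choice is slightly more economical: in its linearly independent case one deduces $h(e)\neq 0$ from the relation $f=h(e)g+\sum_j h_j(e)\chi_j$, and then \emph{only} family (2) of Theorem~\ref{thm5-1} can apply, since family (1) would force $f$ into the span of the $\chi_j$ and contradict independence. Thus family (3) of the present theorem emerges directly, with no fold-back step needed. Your split works too, but---exactly as you anticipate---you must carry out the verification that family (1) of Theorem~\ref{thm5-1} arising in your $h(e)\neq 0$ branch produces nothing beyond your family (2); this is a short computation (solve for $g$ and the $h_j$ and read off the coefficients $\beta_j$), but it does have to be written out. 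Apart from this minor organizational difference, the two arguments coincide.
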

\begin{proof}  We check by elementary computations that if $f,\,g,\,h,\,h_{1},\cdot\cdot\cdot,\,h_{N}$
are of the forms (1)-(3) then
$(f,\,g,\,h,\,h_{1},\cdot\cdot\cdot,\,h_{N})$ is a solution of
(\ref{Eq5-1}),
so left is that any solution $(f,\,g,\,h,\,h_{1},\cdot\cdot\cdot,\,h_{N})$ of (\ref{Eq5-1}) fits into (1)-(3).\\
We will discuss two cases according to whether the set
$\{f,\chi_{1},\cdot\cdot\cdot,\chi_{N}\}$ is linearly independent or
not.
\par Case 1: Suppose that the set $\{f,\chi_{1},\cdot\cdot\cdot,\chi_{N}\}$ is linearly dependent.
As seen earlier the set $\{\chi_{1},\cdot\cdot\cdot,\chi_{N}\}$ is
linearly independent. Then there exist
$a_{1},\cdot\cdot\cdot,a_{N}\in\mathbb{C}$ such that
\begin{equation}\label{Eq5-25-1}f=\sum_{j=1}^{N}a_{j}\chi_{j}.\end{equation}
There are two subcases to consider.
\par Subcase 1.1: Suppose that $h=0$. Then $g$ is arbitrary, and taking
(\ref{Eq5-25-1}) into account, the functional equation (\ref{Eq5-1})
implies that
$$\sum_{j=1}^{N}a_{j}\chi_{j}(y)\chi_{j}(x)=\sum_{j=1}^{N}h_{j}(y)\chi_{j}(x),$$
for all $x,y\in G$ and all $j=1,\cdot\cdot\cdot,N$. So, the set
$\{\chi_{1},\cdot\cdot\cdot,\chi_{N}\}$ being linearly independent,
and $x$ and $y$ arbitrary, we get, according to Proposition
\ref{prop51}, that $h_{j}=a_{j}\chi_{j}$ for all
$j=1,\cdot\cdot\cdot,N$. The result occurs in part (1).
\par Subcase 1.2: Suppose that $h\neq0$. Then, we derive from Eq. (\ref{Eq5-1})
that there exist constants
$\beta_{1},\cdot\cdot\cdot,\beta_{N}\in\mathbb{C}$ such that
\begin{equation}\label{Eq5-25-2}g=\sum_{j=1}^{N}\beta_{j}\chi_{j}.\end{equation}
By substituting (\ref{Eq5-25-2}) in (\ref{Eq5-1}) we obtain by a
small computation, that
$$\sum_{j=1}^{N}\chi_{j}(x)[a_{j}\chi_{j}(y)-\beta_{j}h(y)-h_{j}(y)]=0,$$
for all $x,y\in G$. Since the set
$\{\chi_{1},\cdot\cdot\cdot,\chi_{N}\}$ is linearly independent, and
$x$ and $y$ arbitrary, we get, according to Proposition
\ref{prop51}, that
\begin{equation}\label{Eq5-25-3}h_{j}=a_{j}\chi_{j}-\beta_{j}h,\end{equation} for
all $j=1,\cdot\cdot\cdot,N$. The identities (\ref{Eq5-25-1}),
(\ref{Eq5-25-2}) and (\ref{Eq5-25-3}), with $h\neq0$ arbitrary,
constitute the result (2).
\par Case 2: Suppose that the set $\{f,\chi_{1},\cdot\cdot\cdot,\chi_{N}\}$ is linearly independent. By putting $y=e$ in
the functional equation (\ref{Eq5-1}) we get that
\begin{equation}\label{Eq5-25}f=h(e)g+\sum_{j=1}^{N}h_{j}(e)\chi_{j}.\end{equation}
Since the set $\{f,\chi_{1},\cdot\cdot\cdot,\chi_{N}\}$ is linearly
independent the identity (\ref{Eq5-25}) impose that $h(e)\neq0$,
which implies that
\begin{equation}\label{Eq5-29}g=\dfrac{1}{h(e)}f-\sum_{j=1}^{N}\dfrac{h_{j}(e)}{h(e)}\chi_{j}.\end{equation}
Substituting this back into (\ref{Eq5-1}) we get, by elementary
computation, that
$$f(xy)=f(x)\dfrac{h(y)}{h(e)}+\sum_{j=1}^{N}[h_{j}(y)-\dfrac{h_{j}(e)}{h(e)}h(y)]\chi_{j}(x)=0,$$
for all $x,y\in G$. Then the functions
$f,h/h(e),h_{1}-\dfrac{h_{1}(e)}{h(e)}h,\cdot\cdot\cdot,h_{N}-\dfrac{h_{N}(e)}{h(e)}h$
satisfy the functional equation (\ref{Eq5-2}). Hence, according to
Theorem \ref{thm5-1} and seeing that the set
$\{f,\chi_{1},\cdot\cdot\cdot,\chi_{N}\}$ is linearly independent,
we derive that:
$$f=a\chi+\sum_{j=1}^{N}\varphi_{\chi,\chi_{j}},\,h/h(e)=\chi\,\,\text{and}\,\,h_{j}-\dfrac{h_{j}(e)}{h(e)}h=\varphi_{\chi,\chi_{j}}$$ for all $j=1,\cdot\cdot\cdot,N$,
where $\chi$ is a character of $G$ and $a\in\mathbb{C}$ is a
constant. Using this and the identity (\ref{Eq5-29}) we get that
$$f=a\chi+\sum_{j=1}^{N}\varphi_{\chi,\chi_{j}},$$
$$g=\dfrac{a}{h(e)}\chi+\sum_{j=1}^{N}\dfrac{1}{h(e)}\varphi_{\chi,\chi_{j}}-\sum_{j=1}^{N}\dfrac{h_{j}(e)}{h(e)}\chi_{j},$$
$$h=h(e)\chi$$
and
$$h_{j}=h_{j}(e)\chi+\varphi_{\chi,\chi_{j}},$$
for all $j=1,\cdot\cdot\cdot,N$. Defining complex constants
$$b:=h(e)\neq0,\,\alpha:=\dfrac{a}{h(e)}\,\quad\text{and}\quad
a_{j}:=\dfrac{h_{j}(e)}{h(e)},$$  and written
$\varphi_{\chi,\chi_{j}}$ instead of
$\dfrac{1}{b}\varphi_{\chi,\chi_{j}}$ for all
$j=1,\cdot\cdot\cdot,N$, the formulas above read
$$f=\alpha
b\chi+\sum_{j=1}^{N}b\varphi_{\chi,\chi_{j}},\,g=\alpha\chi-\sum_{j=1}^{N}a_{j}\chi_{j}+\sum_{j=1}^{N}\varphi_{\chi,\chi_{j}},\,h=b\chi\,\,\text{and}\,\,h_{j}=a_{j}b\chi+b\varphi_{\chi,\chi_{j}},$$
for all $j=1,\cdot\cdot\cdot,N$. The result occurs in part (3). This
completes the proof.
\end{proof}
\par We close this section with relating our results to two
functional equations in the literature. The proof of Theorem
\ref{thm5-2} reveals that if the set
$\{f,\chi_{1},\cdot\cdot\cdot,\chi_{N}\}$ is linearly independent,
then the solutions $(f,\,g,\,h,\,h_{1},\cdot\cdot\cdot,\,h_{N})$ of
the functional equation (\ref{Eq5-1}) are of the form (3) in Theorem
\ref{thm5-2}. This is an extension of \cite[Theorem 14]{Stetkaer2}
as example \ref{example1} says.\\
Example \ref{example2} relate Theorem \ref{thm5-2}(3) to the simple
generalization of Prexider's functional equation
$$f(xy)=g_{1}(x)\chi(y)+\chi(x)h_{1}(y),\,\,x,y\in G,$$ where $\chi$
is a character of $G$ (see \cite[Proposition 15]{Stetkaer2}).
\begin{exple}\label{example1}
If we choice $a_{j}=0$ and $\varphi_{\chi,\chi_{j}}=0$ for all
$j=2,\cdot\cdot\cdot,N$ in part (3) of Theorem \ref{thm5-2} we get
that $h_{j}=0$ for all $j=2,\cdot\cdot\cdot,N$. With the notations
$c_{1}:=\alpha b,\,c_{2}:=a_{1}b-c_{1}$ and
$f_{\chi,\chi_{1}}:=c_{1}(\chi-\chi_{1})+b\varphi_{\chi,\chi_{1}}$
we get that
\begin{equation*}f=c_{1}\mu+f_{\chi,\chi_{1}},\,g=(f_{\chi,\chi_{1}}-c_{2}\mu)/b,\,h=b\chi
\quad\text{and}\quad
h_{1}=c_{1}\chi_{1}+c_{2}\chi+f_{\chi,\chi_{1}},\end{equation*}
where $b\in\mathbb{C}\setminus\{0\}$ and $c_{1},c_{2}\in\mathbb{C}$
are constants, and $f_{\chi,\chi_{1}}:G\to\mathbb{C}$ satisfies the
functional equation
$$f_{\chi,\chi_{1}}(xy)=f_{\chi,\chi_{1}}(x)\chi(y)+\chi_{1}(x)f_{\chi,\chi_{1}}(y),\,\,x,y\in G,$$
which is the solution obtained in \cite[Theorem 14]{Stetkaer2}.
\end{exple}
\begin{exple}\label{example2}
If we take $b=1$, $\chi=\chi_{1}$, $a_{j}=0$ and
$\varphi_{\chi,\chi_{j}}=0$ for all $j=2,\cdot\cdot\cdot,N$ in part
(3) of Theorem \ref{thm5-2}, we get that $h_{j}=0$ for all
$j=2,\cdot\cdot\cdot,N$ and
$\varphi_{\chi,\chi_{1}}=\varphi_{\chi,\chi}=\chi A$ where $A$ is an
additive function on $G$. With the notations
$c_{1}:=\alpha-a_{1}\,\text{and}\,\, c_{2}:=a_{1}$  we obtain
\begin{equation*}f=(c_{1}+c_{2}+A)\chi,\,g=(c_{1}+A)\chi\,
\quad\text{and}\quad h_{1}=(c_{2}+A)\chi,\end{equation*} where
$c_{1},c_{2}\in\mathbb{C}$ are constants, which is the solution
obtained in \cite[Proposition 15]{Stetkaer2}.
\end{exple}
\section{Solutions of Eq. (\ref{equ52}) and Eq. (\ref{equ51})}
Throughout this section $G$ denotes a monoid with identity element
$e$, $\mu,\chi:G\to\mathbb{C}$ different nonzero multiplicative
functions, $A:G\to\mathbb{C}$ an additive function such that $\chi
A\neq0$ and $g:=\dfrac{\mu+\chi}{2}$.
\subsection{Solutions of Eq. (\ref{equ52})}\quad\\\\Proposition
\ref{Prop56} solves the functional equation (\ref{equ52}), i.e.,
\begin{equation*}f(xy)=f(x)h_{1}(y)+g(x)h_{2}(y)+\chi(x)A(x)h(y),\,x,y\in G,\end{equation*}under
the assumption that the set $\{f,\mu,\chi,\chi\,A\}$ is linearly
independent.
\begin{prop}\label{Prop56} The solutions $(f,h_{1},h_{2},h)$ of
(\ref{equ52}) such that the set $\{f,\mu,\chi,\chi\,A\}$ is linearly
independent are:\\
$$f=c m,\,h_{1}=m,\,h_{2}=h=0,$$ where $m$ is a non zero
multiplicative function and $c\in\mathbb{C}\setminus\{0\}$ is a
constant such that $m\neq\mu$ and $m\neq\chi$.
\end{prop}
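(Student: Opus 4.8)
The plan is to mimic the associativity argument used in the proof of Proposition \ref{prop56}, adapted to accommodate the additive function $A$. First I would fix $x,y,z\in G$ and compute $f(xyz)$ in two ways, namely as $f(x(yz))$ and as $f((xy)z)$, each time substituting the defining equation (\ref{equ52}) and using $g=\dfrac{\mu+\chi}{2}$. The one genuinely new ingredient compared with Proposition \ref{prop56} is the term $\chi(xy)A(xy)$: since $\chi$ is multiplicative and $A$ is additive, it splits as
$$\chi(xy)A(xy)=\chi(x)A(x)\,\chi(y)+\chi(x)\,\chi(y)A(y),$$
so part of it feeds the coefficient of $\chi(x)A(x)$ and part of it feeds the coefficient of $\chi(x)$. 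Keeping track of this splitting cleanly is the part of the computation that needs the most care.

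Having expanded both sides, I would collect terms according to the four functions $f,\mu,\chi,\chi A$ of the variable $x$. Because $\{f,\mu,\chi,\chi A\}$ is assumed linearly independent, the coefficients, which are functions of $y,z$, must match, giving a system of four functional equations: $h_{1}(yz)=h_{1}(y)h_{1}(z)$ (from $f$), $h_{2}(yz)=h_{2}(y)h_{1}(z)+\mu(y)h_{2}(z)$ (from $\mu$), $h_{2}(yz)=h_{2}(y)h_{1}(z)+\chi(y)h_{2}(z)+2\chi(y)A(y)h(z)$ (from $\chi$), and $h(yz)=h(y)h_{1}(z)+\chi(y)h(z)$ (from $\chi A$). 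Thus $h_{1}$ is multiplicative, say $h_{1}=m$.

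The decisive step is to subtract the second equation from the third, which eliminates the common terms and yields
$$[\chi(y)-\mu(y)]h_{2}(z)+2\chi(y)A(y)h(z)=0\qquad\text{for all }y,z\in G.$$
Here I would invoke the linear independence of the triple $\{\mu,\chi,\chi A\}$: the pair $\{\mu,\chi\}$ is independent by Proposition \ref{prop51}, while $\chi A$ cannot be a linear combination of $\mu$ and $\chi$, since Lemma \ref{lem52} would then force $\chi A=0$, contrary to the standing hypothesis $\chi A\neq0$. Reading the displayed identity as a vanishing linear combination of $\chi,\mu,\chi A$ in the variable $y$ therefore gives $h_{2}=0$ and $h=0$ simultaneously.

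With $h_{2}=h=0$ the equation (\ref{equ52}) collapses to $f(xy)=f(x)m(y)$; setting $x=e$ yields $f=cm$ with $c:=f(e)$. It then remains only to record the non-degeneracy constraints, all of which follow from the linear independence of $\{f,\mu,\chi,\chi A\}$: if $c=0$ then $f=0$; if $m=0$ then again $f=0$; and if $m=\mu$ or $m=\chi$ then $f$ is a scalar multiple of $\mu$ or of $\chi$; each of these contradicts independence. Hence $c\in\mathbb{C}\setminus\{0\}$, $m$ is a nonzero multiplicative function, and $m\neq\mu$, $m\neq\chi$, which is exactly the asserted form. The main obstacle is really just the careful bookkeeping in the splitting of $\chi(xy)A(xy)$ and the verification that $\{\mu,\chi,\chi A\}$ is independent via Lemma \ref{lem52}; once those are in hand the system resolves immediately.
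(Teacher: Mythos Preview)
Your proposal is correct and follows essentially the same associativity argument as the paper's own proof: expand $f(x(yz))$ and $f((xy)z)$, compare coefficients against the independent set $\{f,\mu,\chi,\chi A\}$, subtract the $\mu$- and $\chi$-equations to force $h_{2}=h=0$, and then read off $f=cm$ with the stated constraints. The only cosmetic differences are that you record the fourth equation (for the $\chi A$-coefficient), which the paper omits since it is not needed, and that you make explicit the justification---via Proposition~\ref{prop51} and Lemma~\ref{lem52}---for the linear independence of $\{\mu,\chi,\chi A\}$, which the paper invokes tacitly.
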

\begin{proof} It is easy to check that the formulas in Proposition
\ref{Prop56} define solutions of Eq. (\ref{equ52}), so left is that
any
solution $(f,h_{1},h_{2},h)$ is of that form.\\
Let $x,y,z\in G$ be arbitrary. First we compute $f(xyz)$ as
$f(x(yz))$ and then as $f((xy)z)$. Using Eq. (\ref{equ52}) and that
$g=\dfrac{\mu+\chi}{2}$ we obtain
\begin{equation}\label{equ54}f(x(yz))=f(x)h_{1}(yz)+\dfrac{1}{2}\mu(x)h_{2}(yz)+\dfrac{1}{2}\chi(x)h_{2}(yz)+\chi(x)A(x)h(yz).\end{equation}
On the other hand, by making use that $\mu$ and $\chi$ are
multiplicative and that $A$ is additive, we get from Eq.
(\ref{equ52}) that
\begin{equation*}\begin{split}f((xy)z)=f(xy)h_{1}(z)+g(xy)h_{2}(z)+\chi(xy)A(xy)h(z)\quad\quad\quad\quad\quad\quad\quad\quad\quad\quad\\
=[f(x)h_{1}(y)+\dfrac{1}{2}\mu(x)h_{2}(y)+\dfrac{1}{2}\chi(x)h_{2}(y)+\chi(x)A(x)h(y)]h_{1}(z)+\dfrac{1}{2}\mu(x)\mu(y)h_{2}(z)\\
+\dfrac{1}{2}\chi(x)\chi(y)h_{2}(z)+\chi(x)\chi(y)A(y)h(z)+\chi(x)A(x)\chi(y)h(z).\quad\quad\quad\quad\quad\quad\quad\quad\end{split}\end{equation*}
So that
\begin{equation}\label{equ55}\begin{split}f((xy)z)&=f(x)h_{1}(y)h_{1}(z)+\dfrac{1}{2}\mu(x)[h_{2}(y)h_{1}(z)+\mu(y)h_{2}(z)]\\
&+\dfrac{1}{2}\chi(x)[h_{2}(y)h_{1}(z)+\chi(y)h_{2}(z)+2\chi(y)A(y)h(z)]\\&+\chi(x)A(x)[h(y)h_{1}(z)+\chi(y)h(z)].\end{split}\end{equation}
As the set $\{f,\mu,\chi,\chi\,A\}$ is linearly independent and
$x,y,z$ are arbitrary, we derive from (\ref{equ54}) and
(\ref{equ55}) that $(h_{1},h_{2},h)$ is a solution of the following
functional equations as a system
\begin{equation}\label{equ56}h_{1}(xy)=h_{1}(x)h_{1}(y),\end{equation}
\begin{equation}\label{equ57}h_{2}(xy)=h_{2}(x)h_{1}(y)+\mu(x)h_{2}(y)\end{equation}
and
\begin{equation}\label{equ58}h_{2}(xy)=h_{2}(x)h_{1}(y)+\chi(x)h_{2}(y)+2\chi(x)A(x)h(y),\end{equation}
for all $x,y\in G$. The functional equation (\ref{equ56}) implies
that $h_{1}=m$ where $m:G\rightarrow\mathbb{C}$ is a multiplicative
function. From (\ref{equ57}) and (\ref{equ58}) we get that
$$[\mu(x)-\chi(x)]h_{2}(y)=2\chi(x)A(x)h(y)$$ for all $x,y\in G$,
from which we deduce that $h(y)=h_{2}(y)=0$ for all $y\in G$, and
then $h_{2}=h=0$. Hence, the functional equation (\ref{equ52})
reduces to
$$f(xy)=f(x)m(y)$$ for all $x,y\in G$, which yields, by putting
$x=e$, that $$f=c m\,\,\text{where}\,\,c:=f(e).$$ If $c=0$ or
$m\in\{0,\mu,\chi\}$, then the set $\{f,\mu,\chi,\chi\,A\}$ is
linearly dependent, contradicting the hypothesis. Thus
$c\in\mathbb{C}\setminus\{0\}$, $m\neq0$, $m\neq\mu$ and
$m\neq\chi$.
\end{proof}
\begin{thm}\label{thm56} The solutions $(f,h_{1},h_{2},h)$ of
(\ref{equ52}) are:\\
(1) $f=cm,\,h_{1}=m,\,h_{2}=h=0$, where $m$ is a non zero
multiplicative function and $c\in\mathbb{C}\setminus\{0\}$ is a
constant such that $m\neq\mu$ and $m\neq\chi$;\\
(2) $f=0,\,h_{1}\,\,\text{is arbitrary}\,\,\text{and}\,\,
h_{2}=h=0$;\\
(3)$$f=a\mu+b\chi+c\chi\,A,\,\,h_{1}=\dfrac{1}{a-b}(a\mu-b\chi-c\chi\,A),$$
$$h_{2}=\dfrac{2a}{a-b}(-b\mu+b\chi+c\chi\,A),\,\,h=\dfrac{c}{a-b}(-a\mu+a\chi+c\chi\,A),$$
where $a,b,c\in\mathbb{C}$ are constants such that
$(a,b,c)\neq(0,0,0)$ and $a\neq b$.
\end{thm}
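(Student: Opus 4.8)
The plan is to follow the same two-case strategy used for Eq.~(\ref{eq52}), splitting according to whether the set $\{f,\mu,\chi,\chi A\}$ is linearly independent. First I would dispose of the converse by the routine verification that each of the quadruples in (1)--(3) satisfies (\ref{equ52}); this is the elementary-computation part the theorem advertises. If $\{f,\mu,\chi,\chi A\}$ is linearly independent, then Proposition~\ref{Prop56} applies verbatim and delivers exactly solution~(1), so nothing further is needed in that branch.

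The substance lies in the dependent case. Before exploiting dependence I would record that $\{\mu,\chi,\chi A\}$ is itself linearly independent: if $\alpha\mu+\beta\chi+\gamma\chi A=0$ with $\gamma\neq0$, then $\chi A$ is exhibited as a linear combination of multiplicative functions, forcing $\chi A=0$ by Lemma~\ref{lem52} and contradicting $\chi A\neq0$; hence $\gamma=0$, and then $\alpha=\beta=0$ by Proposition~\ref{prop51} since $\mu,\chi$ are distinct nonzero multiplicative functions. Consequently linear dependence of $\{f,\mu,\chi,\chi A\}$ means $f=a\mu+b\chi+c\chi A$ for some $(a,b,c)\in\mathbb{C}^{3}$.

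Next I would substitute this expression for $f$ into (\ref{equ52}), using additivity $A(xy)=A(x)+A(y)$ to expand $f(xy)=a\mu(x)\mu(y)+b\chi(x)\chi(y)+c\chi(x)\chi(y)(A(x)+A(y))$, and regroup both sides as linear combinations of the three $x$-functions $\mu(x)$, $\chi(x)$ and $\chi(x)A(x)$. Their linear independence lets me equate coefficients, yielding the system $a\mu=ah_{1}+\tfrac12 h_{2}$, $\;b\chi+c\chi A=bh_{1}+\tfrac12 h_{2}$ and $c\chi=ch_{1}+h$. Subtracting the first two relations gives $(a-b)h_{1}=a\mu-b\chi-c\chi A$, which is the hinge of the argument.

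The case analysis then closes the proof. If $a=b$, the hinge reads $a\mu-a\chi-c\chi A=0$, and the independence of $\{\mu,\chi,\chi A\}$ forces $a=b=c=0$; hence $f=0$, the first relation gives $h_{2}=0$, the third gives $h=0$, and $h_{1}$ remains arbitrary, which is solution~(2). If $a\neq b$, I solve the hinge for $h_{1}=\tfrac{1}{a-b}(a\mu-b\chi-c\chi A)$ and back-substitute into the first and third relations to read off $h_{2}=\tfrac{2a}{a-b}(-b\mu+b\chi+c\chi A)$ and $h=\tfrac{c}{a-b}(-a\mu+a\chi+c\chi A)$, with $(a,b,c)\neq(0,0,0)$ automatic since $a\neq b$; this is solution~(3). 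I expect no genuine obstacle here: the only delicate point is securing the linear independence of $\{\mu,\chi,\chi A\}$, since without it neither the representation of $f$ nor the coefficient-matching step is justified, and it is precisely there that Lemma~\ref{lem52}, rather than Proposition~\ref{prop51} alone, is indispensable because $\chi A$ is not multiplicative.
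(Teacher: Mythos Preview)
Your proposal is correct and follows essentially the same route as the paper: split on the linear independence of $\{f,\mu,\chi,\chi A\}$, invoke Proposition~\ref{Prop56} in the independent case, and in the dependent case write $f=a\mu+b\chi+c\chi A$, substitute, and match coefficients against the linearly independent family $\{\mu,\chi,\chi A\}$ (justified via Lemma~\ref{lem52} and Proposition~\ref{prop51}). The only organizational difference is that the paper treats $f=0$ as a separate case before writing $f=a\mu+b\chi+c\chi A$ with $(a,b,c)\neq(0,0,0)$, whereas you recover $f=0$ as the $a=b$ subcase; both arguments arrive at the same system and the same conclusions.
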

\begin{proof} (1) If the set $\{f,\mu,\chi,\chi\,A\}$ is linearly independent we get, by Proposition \ref{Prop56},
the part (1) of Theorem \ref{thm56}.
\par In the remainder of the proof we assume that $\{f,\mu,\chi,\chi\,A\}$ is linearly dependent.\\(2) If $f=0$ then $h_{1}$ is arbitrary and
the functional equation (\ref{equ52}) implies that
$$\dfrac{h_{2}(y)}{2}\mu(x)+\dfrac{h_{2}(y)}{2}\chi(x)+h(y)(\chi\,A)(x)=0$$
for all $x,y\in G$. So $h_{2}=h=0$. In
what follows we assume that $f\neq0$.\\
(3) By using Lemma \ref{lem52} we get that the set
$\{\mu,\chi,\chi\,A\}$ is linearly independent. So, seeing that
$f\neq0$ there exists a triple
$(a,b,c)\in\mathbb{C}^{3}\setminus\{(0,0,0)\}$ such that
$$f=a\mu+b\chi+c\chi\,A.$$ Substituting this back into (\ref{equ52})
we get by a small computation that
\begin{equation*}\begin{split}&(a\mu(y))\mu(x)+[b\chi(y)+c\chi(y)A(y)]\chi(x)+(c\chi(y))(\chi\,A)(x)\\
&=[ah_{1}(y)+\dfrac{1}{2}h_{2}(y)]\mu(x)+[ah_{1}(y)+\dfrac{1}{2}h_{2}(y)]\chi(x)+[ch_{1}(y)+h(y)](\chi\,A)(x),\end{split}\end{equation*}
for all $x,y\in G$. We derive that $(h_{1},h_{2},h)$ is a solution
of the following identities as a system
\begin{equation}\label{equ59}ah_{1}+\dfrac{1}{2}h_{2}=a\mu,\end{equation}
\begin{equation}\label{equ510}bh_{1}+\dfrac{1}{2}h_{2}=b\chi+c\chi\,A,\end{equation}
and
\begin{equation}\label{equ511}ch_{1}+h=c\chi.\end{equation}
When we subtract (\ref{equ510}) from (\ref{equ59}) we obtain
\begin{equation}\label{equ512}(a-b)h_{1}=a\mu-b\chi-c\chi\,A.\end{equation} We have $a\neq b$. Indeed, if
$a=b$ the identity (\ref{equ512}) reduces to
$$a\mu-b\chi-c\chi\,A=0.$$ Hence $a=b=c=0$, which contradicts the
assumption on $f$. Hence, the identity (\ref{equ512}) implies that
$$h_{1}=\dfrac{1}{a-b}(a\mu-b\chi-c\chi\,A).$$
Substituting this back into (\ref{equ59}) and (\ref{equ511}) we
derive that
$$h_{2}=2a(\mu-h_{1})=2a[\mu-\dfrac{1}{a-b}(a\mu-b\chi-c\chi\,A)]=\dfrac{2a}{a-b}(-b\mu+b\chi+c\chi\,A)$$
and
$$h=c(\chi-h_{1})=c[\chi-\dfrac{1}{a-b}(a\mu-b\chi-c\chi\,A)]=\dfrac{c}{a-b}(-a\mu+a\chi+c\chi\,A).$$
\par Conversely, simple computations show that the formulas in
(1)-(3) define solutions of (\ref{equ52}).
\end{proof}
\subsection{Solutions of Eq. (\ref{equ51})}\quad\\\\
In Theorem \ref{thm57} we solve the functional equation
(\ref{equ51}), i.e.,
\begin{equation*}f(xy)=g_{1}(x)h_{1}(y)+g(x)h_{2}(y)+\chi(x)A(x)h(y),\quad x,y\in
G,\end{equation*} where
$f,g_{1},h_{1},h_{2},h:G\rightarrow\mathbb{C}$ are the unknown
functions to be determined.
\begin{thm}\label{thm57} The solutions $(f,g_{1},h_{1},h_{2},h)$ of
(\ref{equ51}) can be listed as follows:\\
(1) $f=0$, $g_{1}$ is arbitrary and $h_{1}=h_{2}=h=0$;\\
(2) $f=0$, $g_{1}=a\dfrac{\mu+\chi}{2}+b\,\chi\,A$, $h_{1}\neq0$ is
arbitrary, $h_{2}=-ah_{1}$ and $h=-bh_{1}$, where $a,b\in\mathbb{C}$ are constants.\\
(3)
$$f=(d_{1}-d_{2})(\dfrac{1}{2}a_{1}\mu-\dfrac{1}{2}a_{2}\chi-\dfrac{1}{2}a_{3}\chi\,A),\,
g_{1}=\dfrac{1}{2}d_{1}\mu+\dfrac{1}{2}d_{2}\chi+d_{3}\chi\,A,$$
$$h_{1}=a_{1}\mu+a_{2}\chi+a_{3}\chi\,A,\,h_{2}=-a_{1}d_{2}\mu-a_{2}d_{1}\chi-a_{3}d_{1}\chi\,A$$
and
$$h=-a_{1}d_{3}\mu+(-a_{2}d_{3}-\dfrac{1}{2}a_{3}d_{1}+\dfrac{1}{2}a_{3}d_{2})\chi-a_{3}d_{3}\chi\,A,$$
where $a_{1},a_{2},a_{3},d_{1},d_{2},d_{3}\in\mathbb{C}$ are
constants
such that $d_{1}\neq d_{2}$ and $(a_{1},a_{2},a_{3})\neq(0,0,0)$.\\
(4) $f=a_{1}\alpha m$, $g_{1}=\alpha
m-a_{2}\dfrac{\mu+\chi}{2}-a_{3}\chi\,A$, $h_{1}=a_{1}m$,
$h_{2}=a_{1}a_{2}m$ and $h=a_{1}a_{3}m$, where
$\alpha,a_{1}\in\mathbb{C}\setminus\{0\}$ and
$a_{2},a_{3}\in\mathbb{C}$ are constants, and
$m:G\rightarrow\mathbb{C}$ is a non zero multiplicative function.
\end{thm}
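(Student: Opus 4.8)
The plan is to follow the template of the proof of Theorem \ref{Thm57}, reducing the five-function equation (\ref{equ51}) to the four-function equation (\ref{equ52}), which has already been solved in Theorem \ref{thm56}. The converse direction (that the quintuples (1)--(4) satisfy (\ref{equ51})) is a routine verification, so I concentrate on showing that every solution falls into (1)--(4).

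First I would dispose of the case $f=0$ by quoting Proposition \ref{prop54}, whose two families are exactly (1) and (2); so from now on assume $f\neq0$. Setting $y=e$ in (\ref{equ51}) and using $\mu(e)=\chi(e)=g(e)=1$ and $A(e)=0$ yields
\begin{equation*}f=h_{1}(e)\,g_{1}+h_{2}(e)\,g+h(e)\,\chi\,A.\end{equation*}
A point worth flagging is that the last term does \emph{not} drop out: the argument of $A$ is $x$, not $e$, so the coefficient $h(e)$ of $\chi\,A$ genuinely appears and later accounts for the $\chi\,A$ part of $g_{1}$ in (3)--(4). I then split on whether $h_{1}(e)=0$ or $h_{1}(e)\neq0$, recalling throughout that $\{\mu,\chi,\chi\,A\}$ is linearly independent (by Proposition \ref{prop51} together with Lemma \ref{lem52}, as already used in the proof of Theorem \ref{thm56}).

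In the case $h_{1}(e)=0$ the displayed identity makes $f$ a linear combination of $\mu,\chi,\chi\,A$. If $h_{1}=0$, substituting this form of $f$ back into (\ref{equ51}) and comparing the coefficients of $\mu(x),\chi(x),\chi(x)A(x)$ forces $h_{2}(e)=h(e)=0$, hence $f=0$, against our assumption; so $h_{1}\neq0$. Picking $y_{0}$ with $h_{1}(y_{0})\neq0$ and isolating $g_{1}(x)h_{1}(y_{0})$ shows that $g_{1}=\alpha\mu+\beta\chi+\gamma\chi\,A$ for some constants. Inserting the expressions for $f$ and $g_{1}$ into (\ref{equ51}) and using the independence of $\{\mu,\chi,\chi\,A\}$ gives the system $\alpha h_{1}+\frac{1}{2}h_{2}=\frac{h_{2}(e)}{2}\mu$, $\beta h_{1}+\frac{1}{2}h_{2}=\frac{h_{2}(e)}{2}\chi+h(e)\chi\,A$ and $\gamma h_{1}+h=h(e)\chi$. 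Subtracting the first two relations, the possibility $\alpha=\beta$ again forces $f=0$, so $\alpha\neq\beta$, and one then solves explicitly for $h_{1},h_{2},h$; after renaming constants this is precisely the part of family (3) with $a_{1}+a_{2}=0$ (equivalently $h_{1}(e)=0$).

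In the case $h_{1}(e)\neq0$ I solve the displayed identity for $g_{1}=\frac{1}{h_{1}(e)}f-\frac{h_{2}(e)}{h_{1}(e)}g-\frac{h(e)}{h_{1}(e)}\chi\,A$ and substitute back; a short rearrangement shows that the quadruple $\bigl(f,\ h_{1}/h_{1}(e),\ h_{2}-\frac{h_{2}(e)}{h_{1}(e)}h_{1},\ h-\frac{h(e)}{h_{1}(e)}h_{1}\bigr)$ solves (\ref{equ52}). Theorem \ref{thm56} then offers three possibilities: the one with reduced $f=0$ is excluded since $f\neq0$; the multiplicative possibility $f=cm$ with $m\neq\mu,\chi$ reconstructs (after renaming the constants) to family (4); and the third possibility reconstructs to the remaining part of (3), namely $a_{1}+a_{2}\neq0$. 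The main obstacle is not conceptual but organizational: one must match the constants produced by the reduction (and by the $h_{1}(e)=0$ branch) to the symmetric parametrization $(a_{1},a_{2},a_{3},d_{1},d_{2},d_{3})$ of (3), check the constraints $d_{1}\neq d_{2}$ and $(a_{1},a_{2},a_{3})\neq(0,0,0)$, and confirm that the two branches together yield family (3) exactly, with the degenerate cases $m=\mu$ or $m=\chi$ of (4) already absorbed into (3).
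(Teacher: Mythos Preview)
Your proposal is correct and follows essentially the same approach as the paper: dispose of $f=0$ via Proposition~\ref{prop54}, then split on $h_{1}(e)$ and, when $h_{1}(e)\neq0$, reduce to (\ref{equ52}) and invoke Theorem~\ref{thm56}. Two small differences are worth noting: the paper cites Lemma~\ref{lem53} (which is literally stated for (\ref{eq51}), not (\ref{equ51})) to handle the $h_{1}(e)=0$ branch, whereas you reprove the needed facts directly, which is slightly cleaner; and your remarks that the $h_{1}(e)=0$ branch gives exactly the $a_{1}+a_{2}=0$ slice of (3), and that the degenerate cases $m\in\{\mu,\chi\}$ of (4) are absorbed into (3), are correct observations the paper leaves implicit.
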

\begin{proof} We check by elementary computations that if $f,g_{1},h_{1},h_{2}$ and $h$
are of the forms (1)-(4) then $(f,g_{1},h_{1},h_{2},h)$ is a
solution of (\ref{equ51}),
so left is that any solution $(f,g_{1},h_{1},h_{2},h)$ of (\ref{equ51}) fits into (1)-(4).\\
There are two cases to consider.
\par Case 1. Suppose $f=0$. By Proposition \ref{prop54}, we
obtain solutions (1) and (2).
\par Case 2. Suppose $f\neq0$. Taking $y=e$ in (\ref{equ51}) we get that
\begin{equation}\label{equ513}f=h_{1}(e)g_{1}+\dfrac{h_{2}(e)}{2}\mu+\dfrac{h_{2}(e)}{2}\chi+h(e)\chi\,A\end{equation}
We split the discussion into the subcases $h_{1}(e)=0$ and
$h_{1}(e)\neq0$.
\par Subcases 2.1. Suppose $h_{1}(e)=0$. Then (\ref{equ513}) gives
\begin{equation}\label{equ514}f=a\mu+a\chi+b\chi\,A\end{equation}
where $a:=\dfrac{h_{2}(e)}{2}$, $b:=h(e)$ and $(a,b)\neq(0,0)$.
Hence, the set $\{f,\mu,\chi,\chi\,A\}$ is linearly dependent. As
$f\neq0$ we get, according to Lemma \ref{lem53}(1), that
$h_{1}\neq0$. So, by Lemma \ref{lem53}(2), the set
$\{g_{1},\mu,\chi,\chi\,A\}$ is linearly dependent. Since the set
$\{\mu,\chi,\chi\,A\}$ is linearly independent there exists a triple
$(\alpha,\beta,\gamma)\in\mathbb{C}^{3}$ such that
\begin{equation}\label{equ515}g_{1}=\alpha\mu+\beta\chi+\gamma\chi\,A.\end{equation}
By substituting (\ref{equ514}) and (\ref{equ515}) in (\ref{equ51})
we derive by simple computations that
\begin{equation*}\begin{split}&(a\mu(y))\mu(x)+[a\chi(y)+b\chi(y)A(y)]\chi(x)+(b\chi(y))(\chi\,A)(x)\\
&=[\alpha h_{1}(y)+\dfrac{1}{2}h_{2}(y)]\mu(x)+[\beta
h_{1}(y)+\dfrac{1}{2}h_{2}(y)]\chi(x)+[\gamma
h_{1}(y)+h(y)](\chi\,A)(x),\end{split}\end{equation*} for all
$x,y\in G$. So, the set $\{\mu,\chi,\chi\,A\}$ being linearly
independent, we derive from the identity above that
$(h_{1},h_{2},h)$ satisfies the following identities as a system
$$\alpha h_{1}+\dfrac{1}{2}h_{2}=a\mu,$$ $$\beta
h_{1}+\dfrac{1}{2}h_{2}=a\chi+b\chi\,A$$ and $$\gamma
h_{1}+h=b\chi.$$ The first and the second identities imply that
$(\alpha-\beta)h_{1}=a\mu-a\chi-b\chi\,A$. Since $(a,b)\neq(0,0)$
and $h_{1}\neq0$ we have $\alpha-\beta\neq0$, so that
\begin{equation}\label{equ516}h_{1}=\dfrac{a}{\alpha-\beta}\mu-\dfrac{a}{\alpha-\beta}\chi-\dfrac{b}{\alpha-\beta}\chi\,A.\end{equation}
Substituting this in the first identity we get that
$$h_{2}=2a\mu-\dfrac{2\alpha a}{\alpha-\beta}\mu+\dfrac{2\alpha a}{\alpha-\beta}\chi+\dfrac{2\alpha
b}{\alpha-\beta}\chi\,A,$$ hence,
\begin{equation}\label{equ517}h_{2}=-\dfrac{2\beta a}{\alpha-\beta}\mu+\dfrac{2\alpha a}{\alpha-\beta}\chi+\dfrac{2\alpha
b}{\alpha-\beta}\chi\,A.\end{equation} The third identity implies
that
$$h=b\chi-\dfrac{a}{\alpha-\beta}\mu-\dfrac{a}{\alpha-\beta}\chi-\dfrac{b}{\alpha-\beta}\chi\,A$$
so that
\begin{equation}\label{equ518}h=-\dfrac{\gamma a}{\alpha-\beta}\mu+[b+\dfrac{\gamma a}{\alpha-\beta}]\chi+\dfrac{\gamma b}{\alpha-\beta}\chi\,A.\end{equation}
Let
$$d_{1}:=2\alpha,\,d_{2}:=2\beta,\,d_{3}:=\gamma$$
$$a_{1}:=\dfrac{2a}{d_{1}-d_{2}},\, a_{2}:=-\dfrac{2a}{d_{1}-d_{2}}\,\,\text{and}\,\,a_{3}:=-\dfrac{2b}{d_{1}-d_{2}}.$$
Notice that $d_{1}\neq d_{2}$ and $(a_{1},a_{2},a_{3})\neq(0,0,0)$
because $\alpha-\beta\neq0$ and $(a,b)\neq(0,0)$. From
(\ref{equ514}), (\ref{equ515}), (\ref{equ516}) and (\ref{equ517}) we
get that
$$f=(d_{1}-d_{2})(\dfrac{1}{2}a_{1}\mu-\dfrac{1}{2}a_{2}\chi-\dfrac{1}{2}a_{3}\chi\,A),\,
g_{1}=\dfrac{1}{2}d_{1}\mu+\dfrac{1}{2}d_{2}\chi+d_{3}\chi\,A,$$
$$h_{1}=a_{1}\mu+a_{2}\chi+a_{3}\chi\,A$$ and $$h_{2}=-a_{1}d_{2}\mu-a_{2}d_{1}\chi-a_{3}d_{1}\chi\,A.$$
From (\ref{equ518}) we obtain
\begin{equation*}\begin{split}h&=-a_{1}d_{3}\mu+[-\dfrac{1}{2}(d_{1}-d_{2})a_{3}+a_{1}d_{3}]\chi-a_{3}d_{3}\chi\,A\\
&=-a_{1}d_{3}\mu+(-a_{2}d_{3}-\dfrac{1}{2}a_{3}d_{1}+\dfrac{1}{2}a_{3}d_{2})\chi-a_{3}d_{3}\chi\,A,\end{split}\end{equation*}
which is solution (3).
\par Subcases 2.2. Suppose $h_{1}(e)\neq0$. Then from (\ref{equ513})
we get that
\begin{equation}\label{equ519}g_{1}=\dfrac{1}{h_{1}(e)}f-\dfrac{h_{2}(e)}{h_{1}(e)}g-\dfrac{h(e)}{h_{1}(e)}\chi\,A.\end{equation}
Substituting this back into (\ref{equ51}) we obtain
$$f(xy)=[\dfrac{1}{h_{1}(e)}f(x)-\dfrac{h_{2}(e)}{h_{1}(e)}g(x)-\dfrac{h(e)}{h_{1}(e)}\chi(x)A(x)]h_{1}(y)+g(x)h_{2}(y)+\chi(x)A(x)h(y)$$
for all $x,y\in G$. So that
$$f(xy)=f(x)(\dfrac{1}{h_{1}(e)}h_{1})(y)+(h_{2}-\dfrac{h_{2}(e)}{h_{1}(e)}h_{1})(y)g(x)+\chi(x)A(x)(h-\dfrac{h(e)}{h_{1}(e)}h_{1})(y)$$
for all $x,y\in G$, i.e., the quadruple
$(f,h_{1}/h_{1}(e),h_{2}-\dfrac{h_{2}(e)}{h_{1}(e)}h_{1},h-\dfrac{h(e)}{h_{1}(e)}h_{1})$
is a solution of the functional equation (\ref{equ52}), so,
according to
Theorem \ref{thm56}, it falls into two categories:\\
(i) $f=c m,\, h_{1}/h_{1}(e)=m,\,
h_{2}-\dfrac{h_{2}(e)}{h_{1}(e)}h_{1}=0,\,
h-\dfrac{h(e)}{h_{1}(e)}h_{1}=0$,\\
where $m$ is a non zero multiplicative function and
$c\in\mathbb{C}\setminus\{0\}$ is a constant such that $m\neq\mu$
and $m\neq\chi$. We define complex constants by
$\alpha:=\dfrac{c}{h_{1}(e)}\in\mathbb{C}\setminus\{0\}$,
$a_{1}:=h_{1}(e)\in\mathbb{C}\setminus\{0\}$,
$a_{2}:=\dfrac{h_{2}(e)}{h_{1}(e)}$ and
$a_{3}:=\dfrac{h(e)}{h_{1}(e)}$. So, the identities above and
(\ref{equ519}) yield that\\$f=a_{1}\alpha m$, $g_{1}=\alpha
m-a_{2}\dfrac{\mu+\chi}{2}-a_{3}\chi\,A$, $h_{1}=a_{1}m$,
$h_{2}=a_{1}a_{2}m$ and $h=a_{1}a_{3}m$, which is solution
(4).\\
(ii)
$$f=a\mu+b\chi+c\chi\,A,\,\,h_{1}/h_{1}(e)=\dfrac{1}{a-b}(a\mu-b\chi-c\chi\,A),$$
$$h_{2}-\dfrac{h_{2}(e)}{h_{1}(e)}h_{1}=\dfrac{2a}{a-b}(-b\mu+b\chi+c\chi\,A),\,\,h-\dfrac{h(e)}{h_{1}(e)}h_{1}=\dfrac{c}{a-b}(-a\mu+a\chi+c\chi\,A),$$
where $a,b,c\in\mathbb{C}$ such that $(a,b,c)\neq(0,0,0)$ and $a\neq
b$. Taking (\ref{equ519}) into account and using the identities
above we get, by simple computations, that
\begin{equation}\label{equ520}\begin{split}&f=a\mu+b\chi+c\chi\,A,\\
&g_{1}=\dfrac{2a-h_{2}(e)}{2h_{1}(e)}\mu+\dfrac{2b-h_{2}(e)}{2h_{1}(e)}\chi+\dfrac{c-h(e)}{h_{1}(e)}\chi\,A,\\
&h_{1}=\dfrac{ah_{1}(e)}{a-b}\mu-\dfrac{bh_{1}(e)}{a-b}\chi-\dfrac{ch_{1}(e)}{a-b}\chi\,A,\\
&h_{2}=-\dfrac{a(2b-h_{2}(e))}{a-b}\mu+\dfrac{b(2a-h_{2}(e))}{a-b}\chi+\dfrac{c(2a-h_{2}(e))}{a-b}\chi\,A,\\
&h=-\dfrac{a(c-h(e))}{a-b}\mu+(c+\dfrac{b(c-h(e))}{a-b})\chi+\dfrac{c(c-h(e))}{a-b}\chi\,A.\end{split}\end{equation}
Defining
$$a_{1}:=\dfrac{ah_{1}(e)}{a-b},\,a_{2}:=-\dfrac{bh_{1}(e)}{a-b},\,a_{3}:=-\dfrac{ch_{1}(e)}{a-b}$$
$$d_{1}:=\dfrac{2a-h_{2}(e)}{h_{1}(e)},\,d_{2}:=\dfrac{2b-h_{2}(e)}{h_{1}(e)}\,\,\text{and}\,\,d_{3}:=\dfrac{c-h(e)}{h_{1}(e)},$$
we have $d_{1}\neq d_{2}$ because $a\neq b$,
$(a_{1},a_{2},a_{3})\neq(0,0,0)$ because $(a,b,c)\neq(0,0,0)$,
$c=-\dfrac{1}{2}a_{3}d_{1}+\dfrac{1}{2}a_{3}d_{2}$, then
(\ref{equ520}) reads
$$f=(d_{1}-d_{2})(\dfrac{1}{2}a_{1}\mu-\dfrac{1}{2}a_{2}\chi-\dfrac{1}{2}a_{3}\chi\,A),\,
g_{1}=\dfrac{1}{2}d_{1}\mu+\dfrac{1}{2}d_{2}\chi+d_{3}\chi\,A,$$
$$h_{1}=a_{1}\mu+a_{2}\chi+a_{3}\chi\,A,\,h_{2}=-a_{1}d_{2}\mu-a_{2}d_{1}\chi-a_{3}d_{1}\chi\,A$$
and
$$h=-a_{1}d_{3}\mu+(-a_{2}d_{3}-\dfrac{1}{2}a_{3}d_{1}+\dfrac{1}{2}a_{3}d_{2})\chi-a_{3}d_{3}\chi\,A,$$
which is solution (3).
\end{proof}
\begin{rem} If $f,g_{1},h_{1},h_{2},h:G\rightarrow\mathbb{C}$
satisfy the functional equation (\ref{equ51}), the formulas in parts
(3) and (4) of Theorem \ref{thm57} reveal that if $f\neq0$ the
functions $f,\,g_{1},\,h_{1},\,h_{2}$ and $h$ are abelian.
\end{rem}
\begin{exple} If we choice $a_{3}=d_{3}=0$ in Theorem \ref{thm57}(3)
or $a_{3}=0$ in Theorem \ref{thm57}(4) we get that $h=0$, then the
functional equation (\ref{equ51}) reduces to
$$f(xy)=g_{1}h_{1}(y)+g(x)h_{2}(y),\,x,y\in G$$ which was solved in \cite[Theorem 8]{Ebanks and
Stetkaer}, and we obtain:\\
(a) for $a_{3}=d_{3}=0$ in Theorem \ref{thm57}(3),
$$f=(d_{1}-d_{2})(\dfrac{1}{2}a_{1}\mu-\dfrac{1}{2}a_{2}\chi),\,
g_{1}=\dfrac{1}{2}d_{1}\mu+\dfrac{1}{2}d_{2}\chi,$$ and
$$h_{1}=a_{1}\mu+a_{2}\chi,\,h_{2}=-a_{1}d_{2}\mu-a_{2}d_{1}\chi,$$
which is the solution obtained in \cite[Theorem 8(b)]{Ebanks and
Stetkaer}.\\
(b) for $a_{3}=0$ in Theorem \ref{thm57}(4),
  $$f=a_{1}\alpha m,\,g_{1}=\alpha
  m-a_{2}\dfrac{\mu+\chi}{2},\,h_{1}=a_{1}m,\quad\text{and}\quad h_{2}=a_{1}a_{2}m,$$
where $\alpha,a_{1},a_{2}\in\mathbb{C}$ are constants and
$m:G\rightarrow\mathbb{C}$ is a non zero multiplicative function
such that $\alpha\neq0$ and $a_{1}\neq0$, which is the solution
obtained in \cite[Theorem 8(a)]{Ebanks and Stetkaer}.
\end{exple}
\begin{exple} By taking $d_{1}=-d_{2}$, $a_{1}=a_{2}=\dfrac{1}{2}$
and $a_{3}=d_{3}=0$ in Theorem \ref{thm57}(3) we get that
$$f=g_{1}=h_{2}=d_{1}\dfrac{\mu-\chi}{2}\quad\text{and}\quad
h_{2}=\dfrac{\mu+\chi}{2},\quad\text{where}\quad
d_{1}\in\mathbb{C}\setminus\{0\},$$ which is the solution of the
classic sine addition law in case $\mu\neq\chi$ obtained in
\cite[Theorem 4.1(c)]{Stetkaer1}.
\end{exple}
\section{Solutions of Eq. (\ref{Eq.5-2}) and Eq. (\ref{Eq.5-1})}
Throughout this section $G$ denotes a group with identity element
$e$, $\mu,\chi:G\to\mathbb{C}^{*}$ different characters and
$A:G\to\mathbb{C}$ a nonzero additive function. We deal with the
functional equations (\ref{Eq.5-2}) and (\ref{Eq.5-1})
\subsection{Solutions of Eq. (\ref{Eq.5-2})}\quad\\\\
In Theorem \ref{thm7-1} we solve the functional equation
(\ref{Eq.5-2}), i.e.,
\begin{equation*}f(xy)=f(x)h_{1}(y)+\mu(x)h_{2}(y)+\chi(x)A(x)h(y),\,x,y\in G,\end{equation*}
where $f,h_{1},h_{2},h$ are the unknown functions to be determined.
\begin{thm}\label{thm7-1} The solutions $(f,h_{1},h_{2},h)$ of
(\ref{Eq.5-2}) are:\\
(1) $f=a\mu$, $h_{1}$ arbitrary, $h_{2}=a\mu-ah_{1}$ and $h=0$, where $a$ ranges over $\mathbb{C}$;\\
(2) $f=a\mu_{1}+\varphi_{\mu_{1},\mu}$, $h_{1}=\mu_{1}$,
$h_{2}=\varphi_{\mu_{1},\mu}$ and $h=0$, where $\mu_{1}$ is a
character of $G$ and $a$ ranges over
$\mathbb{C}$;\\
(3) $f=a\mu+b\chi+b\alpha\chi\,A$, $h_{1}=\chi+\alpha\chi\,A$,
$h_{2}=a\mu-a\chi-a\alpha\chi\,A$ and $h=-\alpha^{2} b\chi\,A$,
where $a$ and $\alpha$ range over $\mathbb{C}$, and $b$ over
$\mathbb{C}\setminus\{0\}$.
\end{thm}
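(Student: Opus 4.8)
The plan is to first verify, routinely and by direct substitution exactly as in the earlier theorems, that the triples in (1)--(3) do solve (\ref{Eq.5-2}), and then to show that every solution falls into one of these families. The whole argument rests on the dichotomy of whether the set $\{f,\mu,\chi,\chi\,A\}$ is linearly independent or not. I would record at the outset that $\{\mu,\chi,\chi\,A\}$ is itself linearly independent: if $a\mu+b\chi+c\chi\,A=0$, then $c\chi\,A$ equals a combination of the multiplicative functions $\mu,\chi$, so $c\chi\,A=0$ by Lemma \ref{lem52}, forcing $c=0$ (as $\chi\,A\neq0$), and then $a=b=0$ because $\mu\neq\chi$ are distinct characters.

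In the case where $\{f,\mu,\chi,\chi\,A\}$ is linearly independent I would run the associativity trick: compute $f(xyz)$ once as $f(x(yz))$ and once as $f((xy)z)$, using $\mu(xy)=\mu(x)\mu(y)$, $\chi(xy)=\chi(x)\chi(y)$ and, crucially, $A(xy)=A(x)+A(y)$. The additivity splits $\chi(xy)A(xy)h(z)$ into $\chi(x)A(x)\chi(y)h(z)+\chi(x)\chi(y)A(y)h(z)$, so that $f((xy)z)$ acquires a term carrying the pure factor $\chi(x)$, a function absent from the $x$-side of $f(x(yz))$. Equating the four coefficients of the linearly independent functions $f,\mu,\chi\,A,\chi$ then gives: $h_{1}$ multiplicative; $h_{2}(yz)=h_{2}(y)h_{1}(z)+\mu(y)h_{2}(z)$; $h(yz)=h(y)h_{1}(z)+\chi(y)h(z)$; and $0=\chi(y)A(y)h(z)$ for all $y,z$. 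The last identity together with $\chi\,A\neq0$ forces $h=0$. A short argument rules out $h_{1}=0$ (it would give $f=f(e)\mu$, against independence), so $h_{1}$ is a character $\mu_{1}$, the $h_{2}$-equation reads $h_{2}=\varphi_{\mu_{1},\mu}$, and setting $x=e$ yields $f=a\mu_{1}+\varphi_{\mu_{1},\mu}$ with $a=f(e)$. This is exactly family (2).

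In the case where $\{f,\mu,\chi,\chi\,A\}$ is linearly dependent, the independence of $\{\mu,\chi,\chi\,A\}$ lets me write $f=a\mu+b\chi+c\chi\,A$. Substituting into (\ref{Eq.5-2}), expanding $A(xy)=A(x)+A(y)$, and comparing the coefficients of the independent $x$-functions $\mu,\chi,\chi\,A$ produces the system $ah_{1}+h_{2}=a\mu$, $bh_{1}=b\chi+c\chi\,A$, $ch_{1}+h=c\chi$. If $b=0$ the middle equation forces $c=0$ (as $\chi\,A\neq0$), whence $f=a\mu$, $h=0$ and $h_{2}=a\mu-ah_{1}$ with $h_{1}$ arbitrary, i.e. family (1). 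If $b\neq0$, the middle equation gives $h_{1}=\chi+\alpha\chi\,A$ with $\alpha:=c/b$, and back-substitution into the first and third equations yields $h_{2}=a\mu-a\chi-a\alpha\chi\,A$ and $h=-\alpha^{2}b\chi\,A$, which is family (3).

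The main obstacle, and the only genuinely delicate point, will be the bookkeeping in the associativity computation of the independent case: one must carefully track the extra $\chi(x)$-term created by the additivity of $A$, since it is precisely this spurious coefficient, absent from the original equation, that forces $h=0$ and thereby collapses the independent case onto the single family (2). Everything else reduces to linear algebra over $\{\mu,\chi,\chi\,A\}$ and an appeal to the known form of $\varphi_{\mu_{1},\mu}$ from Theorem \ref{Thm53}.
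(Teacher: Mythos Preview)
Your proposal is correct and follows essentially the same route as the paper's proof: the same dichotomy on the linear independence of $\{f,\mu,\chi,\chi A\}$, the same associativity computation producing the four identities (in particular the spurious $\chi$-coefficient $\chi(y)A(y)h(z)=0$ that kills $h$), and the same coefficient comparison against $\{\mu,\chi,\chi A\}$ in the dependent case leading to the system $ah_{1}+h_{2}=a\mu$, $bh_{1}=b\chi+c\chi A$, $ch_{1}+h=c\chi$. The only cosmetic differences are that the paper treats $f=0$ separately before entering Case~2 (you absorb it into the $b=0$ branch with $a=0$), and the paper rules out $h_{1}$ vanishing before deducing $h=0$ rather than after; neither affects the argument.
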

\begin{proof} Elementary computations show that if $f,h_{1},h_{2}$ and $h$
are of the forms (1)-(3) then $(f,h_{1},h_{2},h)$ is a solution of
(\ref{Eq.5-2}), so left is that any solution $(f,h_{1},h_{2},h)$ of
(\ref{Eq.5-2}) fits into (1)-(3).\\If $f=0$ then $h_{1}$ is
arbitrary and the functional equation (\ref{Eq.5-2}) becomes
$\mu(x)h_{2}(y)+\chi(x)A(x)h(y)=0$ for all $x,y\in G$. Since the set
$\{\mu,\chi\,A\}$ is linearly independent we get that $h_{2}=h=0$,
which occurs in (1) for $a=0$. In the rest of the proof we assume
that $f\neq0$. We consider two cases.
\par Case 1. Suppose that the set $\{f,\mu,\chi,\chi\,A\}$ is linearly
independent. Let $x,y,z\in G$ be arbitrary. We compute $f(zxy)$ by
using the associativity of the operation of $G$. We have
\begin{equation}\label{Eq.5-3}f(z(xy))=f(z)h_{1}(xy)+\mu(z)h_{2}(xy)+\chi(z)A(z)h(xy).\end{equation}
On the other hand, by using (\ref{Eq.5-2}), that $\mu$ and $\chi$
are multiplicative and $A$ is additive, we get that
\begin{equation}\label{Eq.5-4}\begin{split}&f((zx)y)=f(zx)h_{1}(y)+\mu(zx)h_{2}(y)+\chi(zx)A(zx)h(y)\\
&=[f(z)h_{1}(x)+\mu(z)h_{2}(x)+\chi(z)A(z)h(x)]h_{1}(y)+\mu(zx)h_{2}(y)+\chi(zx)A(zx)h(y)\\
&=f(z)h_{1}(x)h_{1}(y)+\mu(z)[h_{2}(x)h_{1}(y)+\mu(x)h_{2}(y)]+\chi(z)\chi(x)A(x)h(y)\\&+\chi(z)A(z)[h(x)h_{1}(y)+\chi(x)h(y)].\end{split}\end{equation}
Since the set $\{f,\mu,\chi,\chi\,A\}$ is linearly independent and
$x,y,z$ are arbitrary we derive from (\ref{Eq.5-3}) and
(\ref{Eq.5-4}) that
\begin{equation}\label{Eq.5-5}h_{1}(xy)=h_{1}(x)h_{1}(y),\end{equation}
\begin{equation}\label{Eq.5-6}h_{2}(xy)=h_{2}(x)h_{1}(y)+\mu(x)h_{2}(y),\end{equation}
\begin{equation}\label{Eq.5-7}\chi(x)A(x)h(y)=0\end{equation}
and
\begin{equation}\label{Eq.5-8}h(xy)=h(x)h_{1}(y)+\chi(x)h(y),\end{equation}
for all $x,y\in G$. The functional equation (\ref{Eq.5-5}) says that
$\mu_{1}:=h_{1}$ is a multiplicative function on $G$. If there
exists $x_{0}\in G$ such that $\mu_{1}(x_{0})=0$ then $\mu_{1}=0$.
So, by putting $y=e$ in (\ref{Eq.5-2}), we get that
$f=h_{2}(e)\mu+h(e)\chi\,A$, which contradicts that the set
$\{f,\mu,\chi,\chi\,A\}$ is linearly independent. Thus $\mu_{1}$ is
a character of $G$.
\par Since $\chi$ is a character of $G$ and $A$ is an additive function on $G$
we derive from (\ref{Eq.5-7}) that $h=0$.
\par Taking into account that $h_{1}=\mu_{1}$ the functional
equation (\ref{Eq.5-6}) becomes
$$h_{2}(xy)=h_{2}(x)\mu_{1}(y)+\mu(x)h_{2}(y)$$ for all $x,y\in G$,
i.e., $h_{2}=\varphi_{\mu_{1},\mu}$.
\par To find $f$ we put $x=e$ in (\ref{Eq.5-2}) which yields that
$$f=a\mu_{1}+\varphi_{\mu_{1},\mu},$$ where $a:=f(e)$. The result occurs in part
(2).
\par Case 2. Suppose that the set $\{f,\mu,\chi,\chi\,A\}$ is linearly
dependent. Since $\{\mu,\chi,\chi\,A\}$ is linearly independent and
$f\neq0$ there exists a triple
$(a,b,c)\in\mathbb{C}^{3}\setminus\{(0,0,0)\}$ such that
\begin{equation}\label{Eq.5-9}f=a\mu+b\chi+c\chi\,A.\end{equation} Combining
(\ref{Eq.5-2}) and (\ref{Eq.5-9}) we get by a small computation that
\begin{equation}\label{Eq.5-20}f(xy)=(ah_{1}(y)+h_{2}(y))\mu(x)+bh_{1}(y)\chi(x)+(ch_{1}(y)+h(y))\chi(x)A(x),\end{equation}
for all $x,y\in G.$\\On the other hand (\ref{Eq.5-9}) gives
\begin{equation}\label{Eq.5-21}f(xy)=a\mu(y)\mu(x)+(b\chi(y)+c\chi(y)A(y))\chi(x)+c\chi(y)\chi(x)A(x),\end{equation}
for all $x,y\in G$. Since the set $\{\mu,\chi,\chi\,A\}$ is linearly
independent we get from (\ref{Eq.5-20}) and (\ref{Eq.5-21}) that
\begin{equation}\label{Eq.5-22}ah_{1}+h_{2}=a\mu,\end{equation}
\begin{equation}\label{Eq.5-23}bh_{1}=b\chi+c\chi\,A\end{equation}
and
\begin{equation}\label{Eq.5-24}ch_{1}+h=c\chi.\end{equation}
\par If $b=0$ then we get from (\ref{Eq.5-23}) that $c=0$. So,
(\ref{Eq.5-9}) and (\ref{Eq.5-24}) imply that $f=a\mu$ and $h=0$.
From (\ref{Eq.5-22}) we obtain $h_{2}=a\mu-ah_{1}$ with $h_{1}$
arbitrary. The result occurs in part (1).
\par If $b\neq0$ then we derive from (\ref{Eq.5-22}), (\ref{Eq.5-23})
and (\ref{Eq.5-24}) that
$$h_{1}=\chi+\dfrac{c}{b}\chi\,A,\,\,h_{2}=a\mu-a\chi-\dfrac{ac}{b}\chi\,A,\,\,h=-\dfrac{c^{2}}{b}\chi\,A.$$
In (\ref{Eq.5-9}) and the identities above we put
$\alpha=\dfrac{c}{b}$, which yields that
$$f=a\mu+b\chi+b\alpha\chi\,A,\,\,h_{1}=\chi+\alpha\chi\,A,\,\,h_{2}=a\mu-a\chi-a\alpha\chi\,A,\,\,h=-\alpha^{2} b\chi\,A.$$
The result occurs in part (3).
\end{proof}
\subsection{Solutions of Eq. (\ref{Eq.5-1})}\quad\\\\
In Theorem \ref{thm7-2} we solve the functional equation
(\ref{Eq.5-1}), i.e.,
\begin{equation*}f(xy)=g_{1}(x)h_{1}(y)+\mu(x)h_{2}(y)+\chi(x)A(x)h(y),\,x,y\in G,\end{equation*}
where $f,g_{1},h_{1},h_{2},h$ are the unknown functions to be
determined.
\begin{thm}\label{thm7-2} The solutions $(f,g_{1},h_{1},h_{2},h)$ of
(\ref{Eq.5-1}) are:\\
(1) $f=a\mu$, $g_{1}$ arbitrary, $h_{1}=0$, $h_{2}=a\mu$ and
$h=0$, where $a$ ranges over $\mathbb{C}$;\\
(2) $f=a\mu$, $g_{1}=-b\mu+c\chi\,A$, $h_{1}\neq0$ arbitrary,
$h_{2}=a\mu+bh_{1}$ and $h=-ch_{1}$, where $a$, $b$ and $c$ range
over $\mathbb{C}$;\\
(3)
$f=a\mu+b\chi\,A,\,g_{1}=\alpha\mu+\beta\chi+\gamma\chi\,A,\,h_{1}=c\chi\,A,\,h_{2}=a\mu-\alpha
c\chi\,A$ and $h=b\chi-\gamma c\chi\,A$, where
$a,\,\alpha,\,\beta,\,\gamma,\,b$ and $c$ are constants such that $\beta bc\neq0$ and $b=\beta c$;\\
(4)
$f=a_{1}\mu+\varphi_{\mu_{1},\mu},\,g_{1}=(\varphi_{\mu_{1},\mu}-a_{2}\mu)/a+a_{3}\chi\,A,\,h_{1}=a\mu_{1},\,h_{2}=a_{1}\mu+a_{2}\mu_{1}+\varphi_{\mu_{1},\mu}$\\$\quad\text{and}\quad
h=-aa_{3}\mu_{1}$, where $\mu_{1}$ is a character of $G$,
$a_{1},\,a_{2}$ and $a_{3}$ range
over $\mathbb{C}$ and $a$ over $\mathbb{C}\setminus\{0\}$;\\
(5) $f=a\mu+ca_{2}\chi+\alpha
ca_{2}\chi\,A,\,g_{1}=a_{1}\mu+a_{2}\chi+a_{3}\chi\,A,\,h_{1}=c\chi+\alpha
c\chi\,A,
\\h_{2}=a\mu-ca_{1}\chi-\alpha ca_{1}\chi\,A\quad\text{and}\quad
h=c(\alpha a_{2}-a_{3})\chi-\alpha ca_{3}\chi\,A$, where
$\alpha,\,a,\,a_{1}$ and $a_{3}$ range over $\mathbb{C}$, and $c$
and $a_{2}$ over $\mathbb{C}\setminus\{0\}$.
\end{thm}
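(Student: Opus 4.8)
The plan is to classify an arbitrary solution by reducing (\ref{Eq.5-1}) to the already-solved equation (\ref{Eq.5-2}) whenever possible, and to treat the degenerate possibility that $f$ is a scalar multiple of $\mu$ separately via Proposition \ref{prop55}. After the routine check that the quintuples in (1)--(5) satisfy (\ref{Eq.5-1}), I would take any solution $(f,g_{1},h_{1},h_{2},h)$ and set $y=e$. Using $\mu(e)=\chi(e)=1$ and $A(e)=0$ this gives the hinge identity
$$f=h_{1}(e)\,g_{1}+h_{2}(e)\,\mu+h(e)\,\chi\,A,$$
which both expresses $f$ through $g_{1}$ and the data and suggests splitting on the value of $h_{1}(e)$. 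First I would dispose of the case $f=\alpha\mu$ for some $\alpha\in\mathbb{C}$: here Proposition \ref{prop55} applies verbatim and returns precisely solutions (1) and (2). In all that follows I may therefore assume that $f$ is \emph{not} a scalar multiple of $\mu$.

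Suppose next that $h_{1}(e)\neq0$. Then the hinge identity can be solved for $g_{1}$, giving $g_{1}=\frac{1}{h_{1}(e)}f-\frac{h_{2}(e)}{h_{1}(e)}\mu-\frac{h(e)}{h_{1}(e)}\chi\,A$. Substituting this back into (\ref{Eq.5-1}) and collecting the coefficients of $f(x)$, $\mu(x)$ and $\chi(x)A(x)$, I would verify that the quadruple
$$\Big(f,\ \frac{1}{h_{1}(e)}h_{1},\ h_{2}-\frac{h_{2}(e)}{h_{1}(e)}h_{1},\ h-\frac{h(e)}{h_{1}(e)}h_{1}\Big)$$
is a solution of (\ref{Eq.5-2}). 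Theorem \ref{thm7-1} then sorts this quadruple into its three types; since $f$ is not a multiple of $\mu$, type (1) of Theorem \ref{thm7-1} (which forces $f=a\mu$) cannot occur, and feeding types (2) and (3) back through the formula for $g_{1}$ yields solutions (4) and (5) respectively, after renaming the constants.

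Finally, suppose $h_{1}(e)=0$. The hinge identity becomes $f=a\mu+b\,\chi\,A$ with $a:=h_{2}(e)$ and $b:=h(e)$, and because $f$ is not a multiple of $\mu$ we have $b\neq0$. The linear independence of $\{\mu,\chi,\chi\,A\}$, guaranteed by Lemma \ref{lem52}, first rules out $h_{1}=0$ (it would force the $\chi(x)$-coefficient $b\,\chi(y)A(y)$ to vanish, contradicting $b\neq0$), and then, after evaluating at a point where $h_{1}$ is nonzero, shows that $g_{1}=\alpha\mu+\beta\chi+\gamma\,\chi\,A$ for constants $\alpha,\beta,\gamma$. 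Comparing the coefficients of $\mu(x)$, $\chi(x)$ and $\chi(x)A(x)$ in (\ref{Eq.5-1}) produces the system $h_{2}=a\mu-\alpha h_{1}$, $\beta h_{1}=b\,\chi\,A$ and $h=b\chi-\gamma h_{1}$; since $b\neq0$ the middle equation forces $\beta\neq0$ and $h_{1}=(b/\beta)\chi\,A$, and writing $c:=b/\beta$ delivers exactly solution (3).

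I expect the principal obstacle to be the bookkeeping in the case $h_{1}(e)\neq0$: converting each reduced solution of (\ref{Eq.5-2}) back into one of (\ref{Eq.5-1}) requires introducing the correct auxiliary constants and carefully carrying the $\chi\,A$-terms, which occur simultaneously in the expression for $g_{1}$ and inside $h_{2}$ and $h$. The match between type (3) of Theorem \ref{thm7-1} and the target form (5)---where the additive function $A$ generates cross terms linking the $\chi$- and $\chi\,A$-components---is the most delicate computation and the step most in need of care.
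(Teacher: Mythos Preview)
Your proposal is correct and follows essentially the same route as the paper: first dispose of $f=\alpha\mu$ via Proposition~\ref{prop55}, then split on $h_{1}(e)$, reducing the case $h_{1}(e)\neq0$ to Theorem~\ref{thm7-1} and handling $h_{1}(e)=0$ by a direct coefficient comparison in the basis $\{\mu,\chi,\chi A\}$. The only cosmetic difference is that the paper cites Lemma~\ref{lem53} (stated for a different equation) to rule out $h_{1}=0$ and to force $g_{1}\in\mathrm{span}\{\mu,\chi,\chi A\}$, whereas you argue this directly---which is arguably cleaner.
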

\begin{proof}  We check by elementary computations that if $f,g_{1},h_{1},h_{2}$ and $h$
are of the forms (1)-(5) then $(f,g_{1},h_{1},h_{2},h)$ is a
solution of (\ref{Eq.5-1}),
so left is that any solution $(f,g_{1},h_{1},h_{2},h)$ of (\ref{Eq.5-1}) fits into (1)-(5).\\
There are two cases to consider.
\par Case 1. $f$ is proportional to $\mu$.
By Proposition \ref{prop55} we get solutions (1) and (2).
\par Case 2. $f$ is not proportional to $\mu$. By putting $y=e$ in
the functional equation (\ref{Eq.5-1}) we get that
\begin{equation}\label{Eq.5-25}f=h_{1}(e)g_{1}+h_{2}(e)\mu+h(e)\chi\,A.\end{equation}
We consider two subcases according to whether $h_{1}(e)=0$ or not.
\par Subcase 2.1. $h_{1}(e)=0$. Then (\ref{Eq.5-25}) becomes
\begin{equation}\label{Eq.5-26}f=a\mu+b\chi\,A,\end{equation}
where $a:=h_{2}(e)$ and $b:=h(e)$. So the set
$\{f,\mu,\chi,\chi\,A\}$ is linearly dependent. As $f$ is not
proportional to $\mu$ we derive, according to Lemma \ref{lem53}(1),
that $h_{1}\neq0$. So, by Lemma \ref{lem53}(2), the set
$\{g_{1},\mu,\chi,\chi\,A\}$ is linearly dependent. As the set
$\{\mu,\chi,\chi\,A\}$ is linearly independent we deduce that there
exists a triple $(\alpha,\beta,\gamma)\in\mathbb{C}^{3}$ such that
\begin{equation}\label{Eq.5-27}g_{1}=\alpha\mu+\beta\chi+\gamma\chi\,A.\end{equation}
Let $y\in G$ be arbitrary. By substituting (\ref{Eq.5-26}) and
(\ref{Eq.5-27}) in (\ref{Eq.5-1}) we obtain
\begin{equation*}\begin{split}&a\mu(x)\mu(y)+b\chi(x)\chi(y)A(y)+b\chi(x)A(x)\chi(y)\\
&=(\alpha\mu(x)+\beta\chi(x)+\gamma\chi(x)A(x))h_{1}(y)+\mu(x)h_{2}(y)+\chi(x)A(x)h(y)\\
&=\mu(x)(\alpha h_{1}(y)+h_{2}(y))+\chi(x)(\beta
h_{1}(y))+\chi(x)A(x)(\gamma
h_{1}(y)+h(y)),\end{split}\end{equation*} for all $x\in G$. Since
the set $\{\mu,\chi,\chi\,A\}$ is linearly independent we derive
from the identity above that $\alpha h_{1}(y)+h_{2}(y)=a\mu(y)$,
$\beta h_{1}(y)=b\chi(y)A(y)$ and $\gamma h_{1}(y)+h(y)=b\chi(y)$.
So, $y$ being arbitrary, we deduce that
\begin{equation}\label{Eq.5-28}\alpha h_{1}+h_{2}=a\mu,\,\beta h_{1}=b\chi\,A\quad\text{and}\quad\gamma h_{1}+h=b\chi.\end{equation}
Since $f$ is not proportional to $\mu$ we get from (\ref{Eq.5-26})
that $b\neq0$. Hence, the second identity in (\ref{Eq.5-28}) implies
that $\beta\neq0$ because $\chi\,A\neq0$. So, by putting
$c=\dfrac{b}{\beta}$ we derive from (\ref{Eq.5-28}) that
\begin{equation*}h_{1}=c\chi\,A,\,h_{2}=a\mu-\alpha c\chi\,A\quad\text{and}\quad h=b\chi-\gamma c\chi\,A.\end{equation*}
The result occurs in part (3).
\par Subcase 2.2. $h_{1}(e)\neq0$. Here we get from (\ref{Eq.5-25})
that
\begin{equation}\label{Eq.5-29}g_{1}=\dfrac{1}{h_{1}(e)}f-\dfrac{h_{2}(e)}{h_{1}(e)}\mu-\dfrac{h(e)}{h_{1}(e)}\chi\,A.\end{equation}
Substituting this back into (\ref{Eq.5-1}) we get, by elementary
computation, that
$$f(xy)=f(x)\dfrac{h_{1}(y)}{h_{1}(e)}+\mu(x)(h_{2}(y)-\dfrac{h_{2}(e)}{h_{1}(e)}h_{1}(y))+\chi(x)A(x)(h(y)-\dfrac{h(e)}{h_{1}(e)}h_{1}(y)),$$
for all $x,y\in G$. Then the quadruple
$(f,h_{1}/h_{1}(e),h_{2}-\dfrac{h_{2}(e)}{h_{1}(e)}h_{1},h-\dfrac{h(e)}{h_{1}(e)}h_{1})$
satisfies the functional equation (\ref{Eq.5-2}), hence, according
to Theorem \ref{thm7-1} and seeing that $f$ is not proportional to
$\mu$, that quadruple falls into two categories:\\
(i) $f=a_{1}\mu_{1}+\varphi_{\mu_{1},\mu}$,
$h_{1}/h_{1}(e)=\mu_{1}$,
$h_{2}-\dfrac{h_{2}(e)}{h_{1}(e)}h_{1}=\varphi_{\mu_{1},\mu}$ and
$h-\dfrac{h(e)}{h_{1}(e)}h_{1}=0$, where $\mu_{1}$ is a character of
$G$ and $a_{1}$ ranges over $\mathbb{C}$. Then
\begin{equation*}f=a_{1}\mu_{1}+\varphi_{\mu_{1},\mu},\,h_{1}=h_{1}(e)\mu_{1},\,h_{2}=h_{2}(e)\mu_{1}+\varphi_{\mu_{1},\mu}\quad\text{and}\quad h=h(e)\mu_{1}.\end{equation*}
So, (\ref{Eq.5-29}) gives
\begin{equation*}g_{1}=(a_{1}\mu_{1}+\varphi_{\mu_{1},\mu}-h_{2}(e)\mu)/h_{1}(e)-\dfrac{h(e)}{h_{1}(e)}\chi\,A.\end{equation*}
Defining
$$a:=h_{1}(e)\neq0,\,a_{2}:=h_{2}(e)-a_{1},\,\quad\text{and}\quad a_{3}:=-\dfrac{h(e)}{h_{1}(e)}$$
and written $\varphi_{\mu_{1},\mu}$ instead of
$a_{1}(\mu_{1}-\mu)+\varphi_{\mu_{1},\mu}$ the formulas above read
\begin{equation*}\begin{split}&f=a_{1}\mu+\varphi_{\mu_{1},\mu},\,g_{1}=(\varphi_{\mu_{1},\mu}-a_{2}\mu)/a+a_{3}\chi\,A,\,h_{1}=a\mu_{1},\\
&h_{2}=a_{1}\mu+a_{2}\mu_{1}+\varphi_{\mu_{1},\mu}\quad\text{and}\quad
h=-aa_{3}\mu_{1}.\end{split}\end{equation*} Moreover since $f$ is
not proportional to $\mu$, $\varphi_{\mu_{1},\mu}$ is non zero. The
result obtained occurs in part (4).\\
(ii) $f=a\mu+b\chi+\alpha b\chi\,A$,
$h_{1}/h_{1}(e)=\chi+\alpha\chi\,A$,
$h_{2}-\dfrac{h_{2}(e)}{h_{1}(e)}h_{1}=a\mu-a\chi-\alpha a\chi\,A$
and $h-\dfrac{h(e)}{h_{1}(e)}h_{1}=-\alpha^{2} b\chi\,A$, where $a$
and $\alpha$ range over $\mathbb{C}$, and $b$ over
$\mathbb{C}\setminus\{0\}$. Then
\begin{equation}\label{Eq.5-30}\begin{split}&h_{1}=h_{1}(e)\chi+\alpha
h_{1}(e)\chi\,A,\,h_{2}=a\mu+(h_{2}(e)-a)\chi+\alpha(h_{2}(e)-a)\chi\,A\\&\text{and}\quad
h=h(e)\chi+\alpha(h(e)-\alpha b)\chi\,A.\end{split}\end{equation}
So, by using (\ref{Eq.5-29}), we get that
\begin{equation}\label{Eq.5-31}g_{1}=\dfrac{a-h_{2}(e)}{h_{1}(e)}\mu+\dfrac{b}{h_{1}(e)}\chi+\dfrac{\alpha b-h(e)}{h_{1}(e)}\chi\,A. \end{equation}
Defining $c:=h_{1}(e)\neq0$, $a_{1}:=\dfrac{a-h_{2}(e)}{h_{1}(e)}$,
$a_{2}:=\dfrac{b}{h_{1}(e)}\neq0$ and $a_{3}:=\dfrac{\alpha
b-h(e)}{h_{1}(e)}$, we get, by using the formula of $f$ above,
(\ref{Eq.5-30}) and (\ref{Eq.5-31}), that
\begin{equation*}\begin{split}f&=a\mu+ca_{2}\chi+\alpha ca_{2}\chi\,A,\,g_{1}=a_{1}\mu+a_{2}\chi+a_{3}\chi\,A,\,h_{1}=c\chi+\alpha c\chi\,A,
\\&h_{2}=a\mu-ca_{1}\chi-\alpha ca_{1}\chi\,A\quad\text{and}\quad
h=c(\alpha a_{2}-a_{3})\chi-\alpha
ca_{3}\chi\,A.\end{split}\end{equation*} The result occurs in part
(5).
\end{proof}
\begin{exple}
If we choice $a_{3}=0$ in part (4) of Theorem \ref{thm7-2} we get
that $h=0$ and
\begin{equation*}f=a_{1}\mu+\varphi_{\mu_{1},\mu},\,g_{1}=(\varphi_{\mu_{1},\mu}-a_{2}\mu)/a,\,h_{1}=a\mu_{1}
\quad\text{and}\quad
h_{2}=a_{1}\mu+a_{2}\mu_{1}+\varphi_{\mu_{1},\mu},\end{equation*}
with the same constraints on
$\mu_{1},\,\varphi_{\mu_{1},\mu},\,a,\,a_{1}$ and $a_{2}$, which is
the solution obtained in \cite[Theorem 14]{Stetkaer2}.
\end{exple}
\begin{exple}
\par If we choice $\alpha=a_{3}=0$ in part (5) we get that $h=0$ and
\begin{equation*}f=a\mu+ca_{2}\chi,\,g_{1}=a_{1}\mu+a_{2}\chi,\,h_{1}=c\chi\quad\text{and}\quad h_{2}=a\mu-ca_{1}.\end{equation*}
With the notations
$c_{1}:=a+cc_{2},\,c_{2}:=-ca_{1}-ca_{2},\,\lambda:=ca_{2}\neq0$ and
$\widetilde{f}:=\lambda(\chi-\mu)$ the formulas above read
\begin{equation*}f=c_{1}\mu+\widetilde{f},\,g_{1}=(\widetilde{f}-c_{2}\mu)/c,\,h_{1}=c\chi
\quad\text{and}\quad
h_{2}=c_{1}\mu+c_{2}\chi+\widetilde{f},\end{equation*} which is a
solution obtained in \cite[Theorem 14]{Stetkaer2} corresponding to
the non zero central solutions $\widetilde{f}=\lambda(\chi-\mu)$ of
the functional equation
$$\widetilde{f}(xy)=\widetilde{f}(x)\chi(y)+\mu(x)\widetilde{f}(y),\,x,y\in G.$$
\end{exple}
\par We close with non-examples. On finite monoids or groups the
only additive function is $A=0$, so the functional equations
(\ref{equ51}) and (\ref{Eq.5-1}) become
\begin{equation*}f(xy)=g_{1}(x)h_{1}(y)+g(x)h_{2}(y),\quad x,y\in
G,\end{equation*}where $G$ is a monoid, $g$ is the average of two
distinct nonzero multiplicative functions $\mu_{1}$ and $\mu_{2}$ on
$G$, and
\begin{equation}f(xy)=g_{1}(x)h_{1}(y)+\mu(x)h_{2}(y),\quad x,y\in
G,\end{equation} where $G$ is a group and $\mu$ is a character of
$G$, which was recently solved in \cite{Ebanks and Stetkaer} and
\cite{Stetkaer1}.
\par on the other hand, on perfect groups, i.e. groups $G$ such that
$G=[G,G]$, the only character is $\chi=1$ so that the function
$g=\dfrac{\mu_{1}+\mu_{2}}{2}$ cannot exist. As examples of perfect
groups we cite connected groups and semi-simple Lie groups like
$SO(n)$, $SL(n,\mathbb{R})$ and $SL(n,\mathbb{C})$ for $n\geq2$.


\end{document}